\begin{document}
\newcommand\Mand{\ \text{and}\ }
\newcommand\Mor{\ \text{or}\ }
\newcommand\Mfor{\ \text{for}\ }
\newcommand\Real{\mathbb{R}}
\newcommand\RR{\mathbb{R}}
\newcommand\im{\operatorname{Im}}
\newcommand\re{\operatorname{Re}}
\newcommand\sign{\operatorname{sign}}
\newcommand\sphere{\mathbb{S}}
\newcommand\BB{\mathbb{B}}
\newcommand\HH{\mathbb{H}}
\newcommand\dS{\mathrm{dS}}
\newcommand\ZZ{\mathbb{Z}}
\newcommand\codim{\operatorname{codim}}
\newcommand\Sym{\operatorname{Sym}}
\newcommand\End{\operatorname{End}}
\newcommand\Span{\operatorname{span}}
\newcommand\Ran{\operatorname{Ran}}
\newcommand\ep{\epsilon}
\newcommand\Cinf{\cC^\infty}
\newcommand\dCinf{\dot \cC^\infty}
\newcommand\CI{\cC^\infty}
\newcommand\dCI{\dot \cC^\infty}
\newcommand\Cx{\mathbb{C}}
\newcommand\Nat{\mathbb{N}}
\newcommand\dist{\cC^{-\infty}}
\newcommand\ddist{\dot \cC^{-\infty}}
\newcommand\pa{\partial}
\newcommand\Card{\mathrm{Card}}
\renewcommand\Box{{\square}}
\newcommand\Ell{\mathrm{Ell}}
\newcommand\WF{\mathrm{WF}}
\newcommand\WFh{\mathrm{WF}_\semi}
\newcommand\WFb{\mathrm{WF}_\bl}
\newcommand\Vf{\mathcal{V}}
\newcommand\Vb{\mathcal{V}_\bl}
\newcommand\Vz{\mathcal{V}_0}
\newcommand\Hb{H_{\bl}}
\newcommand\Ker{\mathrm{Ker}}
\newcommand\Range{\mathrm{Ran}}
\newcommand\Hom{\mathrm{Hom}}
\newcommand\Id{\mathrm{Id}}
\newcommand\sgn{\operatorname{sgn}}
\newcommand\ff{\mathrm{ff}}
\newcommand\tf{\mathrm{tf}}
\newcommand\esssupp{\operatorname{esssupp}}
\newcommand\supp{\operatorname{supp}}
\newcommand\vol{\mathrm{vol}}
\newcommand\Diff{\mathrm{Diff}}
\newcommand\Diffd{\mathrm{Diff}_{\dagger}}
\newcommand\Diffs{\mathrm{Diff}_{\sharp}}
\newcommand\Diffb{\mathrm{Diff}_\bl}
\newcommand\DiffbI{\mathrm{Diff}_{\bl,I}}
\newcommand\Diffbeven{\mathrm{Diff}_{\bl,\even}}
\newcommand\Diffz{\mathrm{Diff}_0}
\newcommand\Psih{\Psi_{\semi}}
\newcommand\Psihcl{\Psi_{\semi,\cl}}
\newcommand\Psib{\Psi_\bl}
\newcommand\Psibc{\Psi_{\mathrm{bc}}}
\newcommand\TbC{{}^{\bl,\Cx} T}
\newcommand\Tb{{}^{\bl} T}
\newcommand\Sb{{}^{\bl} S}
\newcommand\Lambdab{{}^{\bl} \Lambda}
\newcommand\zT{{}^{0} T}
\newcommand\Tz{{}^{0} T}
\newcommand\zS{{}^{0} S}
\newcommand\dom{\mathcal{D}}
\newcommand\cA{\mathcal{A}}
\newcommand\cB{\mathcal{B}}
\newcommand\cE{\mathcal{E}}
\newcommand\cG{\mathcal{G}}
\newcommand\cH{\mathcal{H}}
\newcommand\cU{\mathcal{U}}
\newcommand\cO{\mathcal{O}}
\newcommand\cF{\mathcal{F}}
\newcommand\cM{\mathcal{M}}
\newcommand\cQ{\mathcal{Q}}
\newcommand\cR{\mathcal{R}}
\newcommand\cI{\mathcal{I}}
\newcommand\cL{\mathcal{L}}
\newcommand\cK{\mathcal{K}}
\newcommand\cC{\mathcal{C}}
\newcommand\cX{\mathcal{X}}
\newcommand\cY{\mathcal{Y}}
\newcommand\cP{\mathcal{P}}
\newcommand\cS{\mathcal{S}}
\newcommand\cZ{\mathcal{Z}}
\newcommand\cW{\mathcal{W}}
\newcommand\Ptil{\tilde P}
\newcommand\ptil{\tilde p}
\newcommand\chit{\tilde \chi}
\newcommand\yt{\tilde y}
\newcommand\zetat{\tilde \zeta}
\newcommand\xit{\tilde \xi}
\newcommand\taut{{\tilde \tau}}
\newcommand\phit{{\tilde \phi}}
\newcommand\mut{{\tilde \mu}}
\newcommand\sigmat{{\tilde \sigma}}
\newcommand\sigmah{\hat\sigma}
\newcommand\zetah{\hat\zeta}
\newcommand\etah{\hat\eta}
\newcommand\loc{\mathrm{loc}}
\newcommand\compl{\mathrm{comp}}
\newcommand\reg{\mathrm{reg}}
\newcommand\GBB{\textsf{GBB}}
\newcommand\GBBsp{\textsf{GBB}\ }
\newcommand\bl{{\mathrm b}}
\newcommand{\sH}{\mathsf{H}}
\newcommand{\cte}{\digamma}
\newcommand\cl{\operatorname{cl}}
\newcommand\hsf{\mathcal{S}}
\newcommand\Div{\operatorname{div}}
\newcommand\hilbert{\mathfrak{X}}

\newcommand\Hh{H_{\semi}}

\newcommand\bM{\bar M}
\newcommand\Xext{X_{-\delta_0}}

\newcommand\xib{{\underline{\xi}}}
\newcommand\etab{{\underline{\eta}}}
\newcommand\zetab{{\underline{\zeta}}}

\newcommand\xibh{{\underline{\hat \xi}}}
\newcommand\etabh{{\underline{\hat \eta}}}
\newcommand\zetabh{{\underline{\hat \zeta}}}

\newcommand\zn{z}
\newcommand\sigman{\sigma}
\newcommand\psit{\tilde\psi}
\newcommand\rhot{{\tilde\rho}}

\newcommand\hM{\hat M}

\newcommand\Op{\operatorname{Op}}
\newcommand\Oph{\operatorname{Op_{\semi}}}

\newcommand\innr{{\mathrm{inner}}}
\newcommand\outr{{\mathrm{outer}}}
\newcommand\full{{\mathrm{full}}}
\newcommand\semi{\hbar}

\newcommand\elliptic{\mathrm{ell}}
\newcommand\diffordgen{k}
\newcommand\difford{2}
\newcommand\diffordm{1}
\newcommand\diffordmpar{1}
\newcommand\even{\mathrm{even}}
\newcommand\dimn{n}
\newcommand\dimnpar{n}
\newcommand\dimnm{n-1}
\newcommand\dimnp{n+1}
\newcommand\dimnppar{(n+1)}
\newcommand\dimnppp{n+3}
\newcommand\dimnppppar{n+3}

\newcommand\cSXp{\cS_{X_+}}
\newcommand\cSXm{\cS_{X_-}}
\newcommand\cSXpm{\cS_{X_\pm}}
\newcommand\cSXz{\cS_{X_0}}
\newcommand\cSXzb{\cS_{X_0,\past}}
\newcommand\cSXt{\cS_{\tilde X}}
\newcommand\cSXtb{\cS_{\tilde X,\past}}
\newcommand\cPXp{\cP_{X_+}}
\newcommand\cPXm{\cP_{X_-}}
\newcommand\cPXpm{\cP_{X_\pm}}
\newcommand\cPXz{\cP_{X_0}}
\newcommand\cPXzb{\cP_{X_0,\past}}
\newcommand\cPXzf{\cP_{X_0,\future}}
\newcommand\cPXt{\cP_{\tilde X}}
\newcommand\cPXtb{\cP_{\tilde X,\past}}

\newcommand\tPsb{\tilde P_{\sigma,\past}^{-1}}
\newcommand\tPssb{\tilde P_{\cdot,\past}^{-1}}
\newcommand\tPsf{\tilde P_{\sigma,\future}^{-1}}

\newcommand\past{\mathrm{past}}
\newcommand\future{\mathrm{future}}

\newcommand\xXp{x_{X_+}}
\newcommand\xXm{x_{X_-}}
\newcommand\xXpm{x_{X_\pm}}
\newcommand\xXz{x_{X_0}}
\newcommand\xXzp{x_{X_0}^+}
\newcommand\xXzm{x_{X_0}^-}
\newcommand\xXzpm{x_{X_0}^\pm}
\newcommand\xXd{x_{X_\bullet}}

\newcommand\aXp{a_{X_+}}
\newcommand\aXpp{a_{X_+}^+}
\newcommand\aXpm{a_{X_+}^-}
\newcommand\aXppm{a_{X_+}^{\pm}}
\newcommand\aXpmpm{a_{X_\pm}^{\pm}}
\newcommand\aXm{a_{X_-}}
\newcommand\aXmp{a_{X_-}^+}
\newcommand\aXmm{a_{X_-}^-}
\newcommand\aXmpm{a_{X_-}^{\pm}}
\newcommand\aXppz{a_{X_+,0}^+}
\newcommand\aXpmz{a_{X_+,0}^-}
\newcommand\aXppmz{a_{X_+,0}^{\pm}}
\newcommand\aXpmpmz{a_{X_\pm,0}^{\pm}}
\newcommand\aXz{a_{X_0}}
\newcommand\aXzp{a_{X_0}^+}
\newcommand\aXzm{a_{X_0}^-}
\newcommand\aXzpz{a_{X_0,0}^+}
\newcommand\aXzmz{a_{X_0,0}^-}
\newcommand\aXzpm{a_{X_0}^{\pm}}
\newcommand\aXzpmz{a_{X_0,0}^{\pm}}
\newcommand\aaXzm{a_{X_0,\past}}
\newcommand\aXzmpm{a_{X_0,\past}^{\pm}}
\newcommand\aXzmp{a_{X_0,\past}^{+}}
\newcommand\aXzmm{a_{X_0,\past}^{-}}
\newcommand\aXt{a_{\tilde X}}
\newcommand\aXtp{a_{\tilde X}^+}
\newcommand\aXtm{a_{\tilde X}^-}
\newcommand\aXtpm{a_{\tilde X}^\pm}
\newcommand\aXtpmz{a_{\tilde X,0}^\pm}
\newcommand\aaXtm{a_{\tilde X,\past}}
\newcommand\aXtmp{a_{\tilde X,\past}^+}
\newcommand\aXtmm{a_{\tilde X,\past}^-}
\newcommand\aXtmz{a_{\tilde X,\past}^0}
\newcommand\aXtmpm{a_{\tilde X,\past}^\pm}
\newcommand\aXtmpmsz{a_{\tilde X,\past,0}^\pm}
\newcommand\aXtXm{a_{\tilde X,X_-}}
\newcommand\aXtXp{a_{\tilde X,X_+}}
\newcommand\aXtXz{a_{\tilde X,X_0}}
\newcommand\aXtXzp{a_{\tilde X,X_0}^+}
\newcommand\aXtXzm{a_{\tilde X,X_0}^-}
\newcommand\aXtXzpm{a_{\tilde X,X_0}^\pm}
\newcommand\aXtXmp{a_{\tilde X,X_-}^+}
\newcommand\aXtXmm{a_{\tilde X,X_-}^-}
\newcommand\aXtXmpm{a_{\tilde X,X_-}^\pm}
\newcommand\aXtmpj{a_{\tilde X,\past,j}^+}
\newcommand\aXtmmj{a_{\tilde X,\past,j}^-}
\newcommand\aXtXmmj{a_{\tilde X,X_-,j}^-}
\newcommand\aXtXzmj{a_{\tilde X,X_0,j}^-}
\newcommand\aXtpsz{a_{\tilde X,0}^+}
\newcommand\aXtmsz{a_{\tilde X,0}^-}
\newcommand\aXtpmsz{a_{\tilde X,0}^{\pm}}

\newcommand\bXppz{b_{X_+,0}^+}
\newcommand\bXzpmz{b_{X_0,0}^{\pm}}
\newcommand\bXzpz{b_{X_0,0}^{+}}
\newcommand\bXzmz{b_{X_0,0}^{-}}
\newcommand\bXtpmz{b_{\tilde X,0}^{\pm}}
\newcommand\bXtpz{b_{\tilde X,0}^{+}}
\newcommand\bXtmz{b_{\tilde X,0}^{-}}

\newcommand\vXp{v_{X_+}}
\newcommand\vXpp{v_{X_+}^+}
\newcommand\vXpm{v_{X_+}^-}
\newcommand\vXppm{v_{X_+}^{\pm}}
\newcommand\vXpmpm{v_{X_\pm}^{\pm}}
\newcommand\vXm{v_{X_-}}
\newcommand\vXmp{v_{X_-}^+}
\newcommand\vXmm{v_{X_-}^-}
\newcommand\vXmpm{v_{X_-}^{\pm}}
\newcommand\vXz{v_{X_0}}
\newcommand\vXzp{v_{X_0}^+}
\newcommand\vXzm{v_{X_0}^-}
\newcommand\vXzpm{v_{X_0}^{\pm}}
\newcommand\vvXzm{v_{X_0,\past}}
\newcommand\vXzmpm{v_{X_0,\past}^{\pm}}
\newcommand\vXzmp{v_{X_0,\past}^{+}}
\newcommand\vXzmm{v_{X_0,\past}^{-}}
\newcommand\vXt{v_{\tilde X}}
\newcommand\vXtp{v_{\tilde X}^+}
\newcommand\vXtm{v_{\tilde X}^-}
\newcommand\vXtz{v_{\tilde X}^0}
\newcommand\vXtpm{v_{\tilde X}^\pm}
\newcommand\vvXtm{v_{\tilde X,\past}}
\newcommand\vXtmp{v_{\tilde X,\past}^+}
\newcommand\vXtmm{v_{\tilde X,\past}^-}
\newcommand\vXtmz{v_{\tilde X,\past}^0}
\newcommand\vXtmpm{v_{\tilde X,\past}^\pm}
\newcommand\vXtXm{v_{\tilde X,X_-}}
\newcommand\vXtXp{v_{\tilde X,X_+}}
\newcommand\vXtXz{v_{\tilde X,X_0}}
\newcommand\vXtXzp{v_{\tilde X,X_0}^+}
\newcommand\vXtXzm{v_{\tilde X,X_0}^-}
\newcommand\vXtXzpm{v_{\tilde X,X_0}^\pm}
\newcommand\vXtXmp{v_{\tilde X,X_-}^+}
\newcommand\vXtXmm{v_{\tilde X,X_-}^-}
\newcommand\vXtXmpm{v_{\tilde X,X_-}^\pm}
\newcommand\vXtXpmz{v_{\tilde X,X_+,0}^-}

\newcommand\wXzp{w_{X_0}^+}
\newcommand\wXzm{w_{X_0}^-}

\newcommand\uXp{u_{X_+}}
\newcommand\uXm{u_{X_-}}
\newcommand\uXz{u_{X_0}}
\newcommand\uXt{u_{\tilde X}}
\newcommand\uXtXm{u_{\tilde X,X_-}}
\newcommand\uXtXp{u_{\tilde X,X_+}}
\newcommand\uXtXz{u_{\tilde X,X_0}}

\newcommand\fXp{f_{X_+}}
\newcommand\fXpmpm{f_{X_\pm}^{\pm}}
\newcommand\fXpp{f_{X_+}^+}
\newcommand\fXpm{f_{X_+}^-}
\newcommand\fXppm{f_{X_+}^{\pm}}
\newcommand\fXm{f_{X_-}}
\newcommand\fXmp{f_{X_-}^+}
\newcommand\fXmm{f_{X_-}^-}
\newcommand\fXmpm{f_{X_-}^{\pm}}
\newcommand\fXz{f_{X_0}}
\newcommand\fXzp{f_{X_0}^+}
\newcommand\fXzm{f_{X_0}^-}
\newcommand\fXzpm{f_{X_0}^{\pm}}
\newcommand\fXt{f_{\tilde X}}
\newcommand\fXtp{f_{\tilde X}^+}
\newcommand\fXtm{f_{\tilde X}^-}
\newcommand\ffXtm{f_{\tilde X,\past}}
\newcommand\fXtmp{f_{\tilde X,\past}^+}
\newcommand\fXtmm{f_{\tilde X,\past}^-}
\newcommand\fXtmz{f_{\tilde X,\past}^0}
\newcommand\fXtmpm{f_{\tilde X,\past}^\pm}

\newcommand\Xd{X_\bullet}
\newcommand\Xde{X_{\bullet,\even}}

\setcounter{secnumdepth}{3}
\newtheorem{lemma}{Lemma}[section]
\newtheorem{prop}[lemma]{Proposition}
\newtheorem{thm}[lemma]{Theorem}
\newtheorem{cor}[lemma]{Corollary}
\newtheorem{result}[lemma]{Result}
\newtheorem*{thm*}{Theorem}
\newtheorem*{prop*}{Proposition}
\newtheorem*{cor*}{Corollary}
\newtheorem*{conj*}{Conjecture}
\numberwithin{equation}{section}
\theoremstyle{remark}
\newtheorem{rem}[lemma]{Remark}
\newtheorem*{rem*}{Remark}
\theoremstyle{definition}
\newtheorem{Def}[lemma]{Definition}
\newtheorem*{Def*}{Definition}

\newcommand{\mar}[1]{{\marginpar{\sffamily{\scriptsize #1}}}}
\newcommand\av[1]{\mar{AV:#1}}

\renewcommand{\theenumi}{\roman{enumi}}
\renewcommand{\labelenumi}{(\theenumi)}

\title[Resolvents, Poisson operators and scattering matrices]{Resolvents, Poisson operators and scattering matrices on
asymptotically hyperbolic  and de Sitter spaces}
\author[Andras Vasy]{Andr\'as Vasy}
\address{Department of Mathematics, Stanford University, CA 94305-2125, USA}

\email{andras@math.stanford.edu}

\subjclass[2000]{Primary 58J50; Secondary 35P25, 35L05, 58J47}

\date{March 22, 2013.}
\thanks{The author gratefully
  acknowledges partial support from the NSF under grant number  
  DMS-1068742.}

\begin{abstract}
We describe how the global operator induced on the boundary of an
asymptotically Minkowski space links two asymptotically hyperbolic
spaces and an asymptotically de Sitter space, and compute the scattering
operator of the linked problem in terms of the scattering operator of
the constituent pieces.
\end{abstract}

\maketitle

\section{Introduction}
In \cite{Vasy-Dyatlov:Microlocal-Kerr} and \cite{Vasy:Microlocal-AH}
new methods were introduced to study the spectral and scattering theory
of the Laplacian on
asymptotically hyperbolic spaces and of the d'Alembertian on
asymptotically de Sitter spaces $(X,g)$. Concretely, examples of these
spaces showed up as
boundary values of a one higher dimensional space $\tilde M$ equipped with a
Lorentzian metric $\tilde g$, which was either a
blown-up version of de Sitter space, or a Kerr-de Sitter type space
(which is a generalization of the former), or a Minkowski
space. However, the analysis could be done (as long as $g$ was a
so-called even metric) without introducing a one
higher dimensional space, by extending across the boundary of the
conformal compactification $\overline{X}$, with a new smooth structure
(the defining function of the boundary replaced by its square, hence
the relevance of evenness) in a suitable manner. This was done
systematically and in full generality in \cite{Vasy:Microlocal-AH} for the case of an asymptotically
hyperbolic space, with complex absorption introduced in the de Sitter
region, and was extended to differential forms in
\cite{Vasy:Analytic-forms}.

Here we recall that a compact $n$-dimensional manifold with boundary,
$\overline{X}$, with interior $X$ equipped with a metric $g$,
is asymptotically hyperbolic, resp.\ de
Sitter, if $g=\frac{\hat g}{x^2}$ where $\hat g$ is a $\CI$
Riemannian, resp.\ Lorentzian (of signature $(1,n-1)$), metric on
$\overline{X}$, with $\hat g(dx,dx)|_{x=0}=1$, for a boundary defining
function $x$. In the Lorentzian setting one also assumes that the
boundary $Y$ of $\overline{X}$ is of the form $Y=Y_+\cup Y_-$, with
$Y_\pm$ unions of connected components, and all
(null-)bicharacteristics\footnote{By bicharacteristics we always mean null-bicharacteristics.} $\gamma(t)$, or equivalently null-geodesics, of $g$
defined over $\RR$ in
$X$ tend to $Y_+$ as the parameter $t\to+\infty$ and to $Y_-$ as
$t\to-\infty$, or vice versa. (This implies global hyperbolicity and
that $\overline{X}$ is diffeomorphic to $[-1,1]\times Y_+$. Further,
the null-bicharacteristics, and hence the null-geodesics, are simply
reparameterized by a conformal factor, such as $x^2$, away from where
it is singular/vanishes, i.e.\ away from the boundary. Correspondingly,
the
requirement on the bicharacteristics is equivalent to maximally
extended bicharacteristics of $\hat g$ being defined over compact
intervals, taking values over $Y_+$ at one endpoint and $Y_-$ at the
other.)

As shown by Graham and Lee \cite{Graham-Lee:Einstein} in
the Riemannian case, and by a similar argument in the Lorentzian case,
there is then a product decomposition near the
boundary $Y_{y}$ of $\overline{X}$
such that
$$
g=\frac{dx^2+\tilde h(x,y,dy)}{x^2}.
$$
If this decomposition can be chosen so that $\tilde h$ is even in $x$,
i.e.\ $\tilde h=h(x^2,y,dy)$, we call $g$ even. This is
equivalent to saying that $h$ is $\CI$ on $\overline{X}_{\even}$, the even
version of $\overline{X}$, which is $\overline{X}$ as a topological manifold, but
the $\CI$ structure is changed so that $\mu=x^2$ is the new defining
function of the boundary.

Returning to the general discussion, there are natural settings, namely asymptotically
Minkowski spaces, in which combinations of asymptotically
de Sitter and asymptotically hyperbolic spaces appear linked in
interesting ways. A class of asymptotically Minkowski spaces
$(\tilde M,\tilde g)$, with $\overline{\tilde M}$ being the
compactification of $\tilde M$
with respect to which $\tilde g$ has appropriate properties, was
introduced by Baskin, Vasy and Wunsch in
\cite{Baskin-Vasy-Wunsch:Radiation}, but as here we think of $\tilde M$ as a
motivation for linking two copies $(X_+,g_+)$ and $(X_-,g_-)$ of asymptotically hyperbolic spaces
(in case of Minkowski space, the quotient of the interior of the future and past light
cones by the $\RR^+$-action) and an asymptotically de Sitter space $(X_0,g_0)$ (in
case of Minkowski space, the quotient of the exterior of the light
cones by the $\RR^+$-action) rather than the main object of interest,
this general class is not directly important here; the important
aspect is the asymptotic behavior of its elements at infinity. In
particular, we may assume that $\tilde M$ is replaced by a new manifold
equipped with an $\RR_+$-action, denoted by $M$, indeed is of
the form $\RR^+_\rho\times\tilde X$, with $\tilde
X=\pa\overline{\tilde M}$; here $\tilde\rho=\rho^{-1}$ is a boundary defining function
of $\overline{\tilde M}$ (thus the boundary of $\overline{\tilde M}$ is where $\rho$ is
infinite). Within
$$
\tilde
X=\overline{X_+}\cup\overline{X_-}\cup\overline{X_0},
$$
the boundaries
of $\overline{X_+}$, resp.\ $\overline{X_-}$, and the future, resp.\ past
boundaries, $\pa_+\overline{X_0}$, resp.\ $\pa_-\overline{X_0}$, are identified.
Mellin transforming (the conjugate by $\rho^{(n-1)/2}$ of) $\rho^2\Box_{\tilde g}$ induces a family of
operators $\tilde P_\sigma$ on $\tilde X$; we refer to this as the family of {\em
  global} operators (on $\tilde X$). On the other hand, a differently normalized
Mellin transform over the smaller domains $X_\pm$ and $X_0$ (which
becomes singular at the boundary of these domains) induces
the spectral families of asymptotically hyperbolic ($X_\pm$)
Laplacians and asymptotically de Sitter ($X_0$) d'Alembertians; we
call these the {\em constituent} operators.
Starting with \cite{Vasy-Dyatlov:Microlocal-Kerr} and
\cite{Vasy:Microlocal-AH}, continued in \cite{Vasy:Analytic-forms} and
\cite{Baskin-Vasy-Wunsch:Radiation}, some aspects of the connection
being the global and constituent operators were explored.
In this paper we show how the global operator on $\tilde X$
links the three constituent operators explicitly. In particular, we relate the
scattering operators (or matrices) of the constituent operators to the
global scattering operator. We remark here that given either an
asymptotically hyperbolic space or an asymptotically de Sitter space,
the spaces $\tilde X$ and $M$ can always be constructed (after
possibly taking two copies of the asymptotically de Sitter space); see Section~\ref{sec:conformal}.

To make this concrete,
the relationship
between the scattering operators
\begin{equation*}\begin{aligned}
&\cSXtb(\sigma):\CI(\pa X_+)\oplus \CI(\pa X_+)\to \CI(\pa X_-)\oplus
\CI(\pa X_-)\ \text{on}\ \tilde X,\\
&\cSXp(\sigma):\CI(\pa X_+)\to\CI(\pa X_+)\ \text{on}\ X_+,\\
&\cSXm(\sigma) :\CI(\pa X_-)\to\CI(\pa X_-)\ \text{on}\ X_-,\  \text{and}\\
&\cSXzb(\sigma) :\CI(\pa_+ X_0)\oplus \CI(\pa_+ X_0)\to \CI(\pa_-
X_0)\oplus \CI(\pa_- X_0)\ \text{on}\ X_0,
\end{aligned}\end{equation*}
(recall that $\pa_+ X_0=\pa X_+$
and $\pa_- X_0=\pa X_-$), defined in
Definitions~\ref{def:Poisson-Xt}, \ref{def:Poisson-X+} and
\ref{def:Poisson-X0} respectively, is given by the following theorem:

\begin{thm}(See Theorem~\ref{thm:S-matrix} and Corollary~\ref{cor:FIO}.)
For $\sigma\notin\imath\ZZ$, if $\sigma$ is not a pole of the inverse
$\tPsb$
of the global
operator $\tilde P_\sigma$ on $\tilde X$ (acting between function
spaces discussed at the end of Section~\ref{sec:conformal}, which
amounts to solving the backwards, or past-oriented problem,
propagating regularity towards $\pa_-X_0$)
then
\begin{equation*}\begin{aligned}
&\cSXtb(\sigma)=\begin{bmatrix}
e^{-\pi \sigma}&e^{\pi\sigma}\\1&1\end{bmatrix}^{-1}
\begin{bmatrix}\Id&0\\0&\cSXm(-\sigma)\end{bmatrix}
\cSXzb(\sigma)
\begin{bmatrix}\Id&0\\0&\cSXp(\sigma)\end{bmatrix}\begin{bmatrix}e^{-\pi\sigma}&e^{\pi\sigma}\\
1&1\end{bmatrix},
\end{aligned}\end{equation*}
i.e.\ apart from integer issues corresponding to the matrices with
$e^{\pi\sigma}$ terms, $\cSXtb(\sigma)$ is essentially the product of
$\cSXpm(\pm\sigma)$ and $\cSXzb(\sigma)$.

Furthermore, $\cSXtb(\sigma)$ is an elliptic Fourier integral operator of order
$0$ associated to the (rescaled or limiting) null-geodesic flow on $X_0$, from
$\pa_+X_0$ to $\pa_-X_0$, with principal symbol as stated in Corollary~\ref{cor:FIO}.
\end{thm}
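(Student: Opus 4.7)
The plan is to realize the identity by matching solutions of $\tilde P_\sigma u=0$ on $\tilde X$ (with the backwards propagation picking out $\tPsb$) with triples of solutions of the constituent operators on $X_+$, $X_-$, $X_0$, and then reading off the scattering maps on each piece.

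The first step is to observe that on each of the open pieces $X_\pm,X_0$ the restriction of $\tilde P_\sigma$, once conjugated and rescaled by the appropriate power of the boundary defining function used to pass from the global to the constituent normalization, becomes exactly the spectral family of the asymptotically hyperbolic Laplacian (with parameter $\sigma$ on $X_+$ and $-\sigma$ on $X_-$, the sign change reflecting the orientation of the past-oriented problem) and of the asymptotically de Sitter d'Alembertian on $X_0$. Thus if $u$ solves $\tilde P_\sigma u=0$ on $\tilde X$ with the backwards prescription, then $u|_{X_+}$, $u|_{X_-}$, $u|_{X_0}$ are constituent solutions and admit the asymptotic expansions at the boundaries on which the Poisson and scattering operators of Definitions~\ref{def:Poisson-Xt}, \ref{def:Poisson-X+}, \ref{def:Poisson-X0} are built.

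The key computational step, and the source of the matrix $\begin{bmatrix}e^{-\pi\sigma}&e^{\pi\sigma}\\ 1&1\end{bmatrix}$, is the matching of asymptotic expansions across the shared interface $\pa X_+=\pa_+ X_0$ (and similarly $\pa X_-=\pa_- X_0$). Because the global problem is smooth in the even variable $\mu=x^2$, a solution produced by $\tPsb$ has a joint asymptotic expansion in $\mu$ near the interface. On the $X_+$ side the leading behavior is $x^{(n-1)/2}(a^+x^{i\sigma}+a^- x^{-i\sigma})$ while on the $X_0$ side it is $|x|^{(n-1)/2}(b^+|x|^{i\sigma}+b^-|x|^{-i\sigma})$; equating the two as functions of $\mu$ (i.e.\ continuing $x^{\pm i\sigma}$ through $\mu=0$ with the phase $e^{\pm \pi\sigma}$ dictated by the branch of $\mu^{1/2}$) gives precisely the change-of-basis matrix above. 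The hypothesis $\sigma\notin\imath\ZZ$ is needed exactly here: only off the imaginary integers are the two indicial roots distinct modulo integers, so no logarithmic terms intervene and the matching is a clean $2\times 2$ isomorphism. Doing the same at $\pa X_-=\pa_- X_0$ with indicial parameter $-\sigma$ produces the $\cSXm(-\sigma)$ factor on the left.

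With these identifications in hand, the formula follows by tracking incoming data at $\pa X_+$ through the diagram: convert hyperbolic data at $\pa X_+$ into the pair of de Sitter coefficients at $\pa_+ X_0$ by the right-hand matrix; identify the coefficient corresponding to the $X_+$-incoming direction with $\cSXp(\sigma)$ applied to the free data (the other is the free data itself, hence the $\Id$ block); propagate through $X_0$ with $\cSXzb(\sigma)$; apply the analogous identification on the $X_-$ side via $\cSXm(-\sigma)$; and finally invert the change-of-basis matrix to read off the hyperbolic-style outgoing data at $\pa X_-$ required by the definition of $\cSXtb(\sigma)$. Finally, since $\cSXp(\sigma)$ and $\cSXm(-\sigma)$ are elliptic pseudodifferential operators of order $0$ (the Laplacian on $X_\pm$ being elliptic in the interior, so its scattering matrix does not move wave front set), while $\cSXzb(\sigma)$ is an elliptic Fourier integral operator of order $0$ associated with the (rescaled) null-geodesic flow from $\pa_+X_0$ to $\pa_- X_0$ (from propagation of singularities for $\Box_{g_0}$), the composition is again an elliptic FIO of order $0$ with the same underlying canonical relation, and its principal symbol is the product of the constituent principal symbols, giving Corollary~\ref{cor:FIO}.

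The main obstacle is the second step: establishing the joint asymptotic expansion in $\mu$ for solutions produced by $\tPsb$ and deriving rigorously the $e^{\pm\pi\sigma}$ transition. One must check that the "past" inverse selects exactly the boundary data pairing that makes this matching well defined, handle the conormal remainders uniformly so that the two-term expansions determine $\cSXtb(\sigma)$, and control the exceptional set $\imath\ZZ$ where indicial roots coincide modulo integers. Once this analytic-continuation computation is in place, the rest of the argument is an algebraic composition of already-constructed Poisson and scattering maps.
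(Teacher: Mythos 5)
Your proof of the main identity follows essentially the same route as the paper: restrict the global solution of $\tilde P_\sigma u=0$ to the constituent pieces, use the even smooth structure to match Taylor series across the interfaces (this is Lemma~\ref{lemma:smooth-match}), and read off the change-of-basis matrix with $e^{\pm\pi\sigma}$ entries from the continuation of the conormal profiles across $\mu=0$. The paper's precise mechanism for the latter is that the distributions $(\mu\pm \imath 0)^{\imath\sigma}$ restrict to $\mu^{\imath\sigma}$ on $\mu>0$ and to $e^{\mp\pi\sigma}|\mu|^{\imath\sigma}$ on $\mu<0$; your account in terms of a branch of $\mu^{1/2}$ is imprecise (it is the branch of $\log\mu$ in $(\mu\pm\imath 0)^{\imath\sigma}$, not of $\mu^{1/2}$), but the mechanism you identify is the same, and the bookkeeping you describe (convert hyperbolic to de Sitter data at $\pa_+X_0$ with the matrix and the $\mathrm{Id}\oplus\cSXp(\sigma)$ block, propagate through $X_0$ with $\cSXzb(\sigma)$, apply $\mathrm{Id}\oplus\cSXm(-\sigma)$, invert the change of basis at $\pa_-X_0$) matches the paper's chain of equations \eqref{eq:hyp-Poisson-forw-gen}--\eqref{eq:hyp-Poisson-back}.

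Your treatment of the FIO claim in Corollary~\ref{cor:FIO}, however, contains a genuine error. You assert that $\cSXp(\sigma)$ and $\cSXm(-\sigma)$ are ``elliptic pseudodifferential operators of order $0$'' and then conclude directly that the composition is an order-$0$ FIO. In fact, by Joshi--S\'a Barreto these are elliptic pseudodifferential operators of complex order $-2\imath\sigma$ and $+2\imath\sigma$ respectively, and $\cSXzb(\sigma)$ likewise carries nontrivial complex orders $\mp 2\imath\sigma$ on its two components. That $\cSXtb(\sigma)$ nonetheless has order $0$ is precisely the content of the paper's renormalization step: conjugating by powers $(\Delta'_{\pa X_\pm})^{\pm\imath\sigma}$ and the corresponding powers on $\pa_\pm X_0$ so the orders cancel, and then computing the principal symbol. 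As written, your final paragraph assumes what has to be proved and skips the only nontrivial part of the corollary.
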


The Fourier integral operator statement is proved using results of
Joshi and S\'a Barreto \cite{Joshi-Sa-Barreto:Inverse} on the
scattering matrix on asymptotically hyperbolic spaces being a
pseudodifferential operator, and of the author that the scattering
operator on asymptotically de Sitter spaces is a Fourier integral
operator associated to the null-geodesic flow
\cite{Vasy:De-Sitter}. Proving the FIO property of $\cSXtb(\sigma)$ intrinsically on
$\tilde X$ is a subject of current work with Nick Haber.

We also describe $\tPsb$ in terms of the resolvents and
Poisson operators in terms of the constituent pieces, see
Theorem~\ref{thm:parts-to-whole}.

In the whole paper we consider the operators acting on functions to
simplify the notation. In \cite{Vasy:Analytic-forms} the setup was
translated to differential forms, and at the cost of somewhat more
complicated notation/asymptotics (distinguishing closed and co-closed
forms), one could work with the form bundles. However, while the
methods of \cite{Joshi-Sa-Barreto:Inverse} and \cite{Vasy:De-Sitter}
work on the form bundles, the analysis there was not carried out in
that setting, so the extension of the FIO statement would require
additional work.

The plan of this paper is the following. In Section~\ref{sec:Minkowski} we recall
how the spaces are linked via the Mellin transform in the case of
Minkowski space. Motivated by this, in
Section~\ref{sec:conformal} we show that given an asymptotically de
Sitter or asymptotically hyperbolic space, one can construct an
asymptotically Minkowski space so that via the Mellin transform one
obtains a family of operators related to the spectral family of the
individual spaces which links them together. In
Section~\ref{sec:relationship} we establish the relationship between
these operators as well as their Poisson operators and scattering operators.

I am very grateful to Jared Wunsch, Dean Baskin, Richard Melrose, Rafe
Mazzeo, Maciej Zworski and Steve Zelditch for
interesting discussions and their encouragement.

\section{Minkowski space, hyperbolic space and de Sitter
  space}\label{sec:Minkowski}
In this section we connect the analysis of the
Laplacians/d'Alembertians
on Minkowski, hyperbolic and
de Sitter spaces. This connection has a direct extension, with simple
modifications, to the general asymptotically hyperbolic/de Sitter
setting, considered in the next section. Here we follow
\cite{Vasy:Analytic-forms}, which considered differential forms, in the
setup, but for the sake of the simplicity of notation we work in the
scalar setting (but this is completely unimportant).

The starting point of analysis is the manifold $\RR^{n+1}$, or rather
$\RR^{n+1}\setminus o$, which is equipped with an $\RR^+$-action given
by dilations: $(\lambda,z)\mapsto \lambda z$.
A transversal to this action is, as a differentiable manifold,
$\sphere^n$, which may
be considered
as the unit sphere with respect to the Euclidean metric, though the
metric properties are not important here (since we are interested in
the Minkowski metric after all).
Thus, writing $(z_1,\ldots,z_{n+1})$ as
the coordinates, let
$$
dz_1^2+\ldots+dz_n^2+dz_{n+1}^2,
$$
be the {\em Euclidean} metric, and
 let $\rho$ be the Euclidean distance function on
$\RR^{n+1}$ from the origin,
namely
$$
\rho=(z_1^2+\ldots+z_n^2+z_{n+1}^2)^{1/2}.
$$
Then $\sphere^n$ is the $1$-level set of $\rho$.
One can identify $\RR^{n+1}\setminus\{0\}$ via the Euclidean polar
coordinate map with
$\RR^+_\rho\times\sphere^n$, namely the map is
$\RR^+_\rho\times\sphere^n \ni(\rho,y)\mapsto \rho y\in \RR^{n+1}\setminus\{0\}$.

The Minkowski metric is given by
$$
\tilde g=dz_{n+1}^2-(dz_1^2+\ldots+dz_n^2),
$$
and we also consider the Minkowski distance function $r$.
Thus, away from the light cone, where $z_{n+1}^2=z_1^2+\ldots+z_n^2$,
let
$$
r=|z_{n+1}^2-(z_1^2+\ldots+z_n^2)|^{1/2}.
$$
To analyze $\Box_{\tilde g}$,
we conjugate $\rho^2\Box_{\tilde g}$ by the Mellin transform $\cM_\rho$
on $\RR^+_\rho\times\sphere^n$, identified with
$\RR^{n+1}\setminus\{0\}$ as above.
The so-obtained operator,
$$
\tilde P_{0,\sigmat}=\cM_\rho \rho^{2}\Box_{\tilde g}\cM_\rho^{-1}\in\Diff^2(\sphere^n),
$$
with $\sigmat$ the Mellin dual parameter,
fits into the framework of \cite{Vasy-Dyatlov:Microlocal-Kerr} and
\cite{Vasy:Microlocal-AH}, see
\cite[Section~5]{Vasy-Dyatlov:Microlocal-Kerr}.
As an aside, we remark that it will be convenient to shift the
Mellin parameter, or equivalently conjugate $\Box_{\tilde g}$ by a
power of $\rho$; this is the reason for adding
the cumbersome subscript $0$ to $\tilde P_{0,\sigmat}$ presently.

While so far we explained why the Minkowski wave operator can
be analyzed by means of \cite{Vasy-Dyatlov:Microlocal-Kerr} and
\cite{Vasy:Microlocal-AH}, we still need to connect this to
asymptotically hyperbolic and de Sitter spaces.
But in the region in $\sphere^n$ corresponding to the interior of the future
light cone, which can be identified with the hyperboloid
$$
\HH^n:\ z_{n+1}^2-(z_1^2+\ldots+z_n^2)=1,\ z_{n+1}>0,
$$
via the $\RR^+$-quotient,
one can also consider the Mellin transform of $r^2\Box_{\tilde g}$
with respect to the decomposition $\RR^+_r\times\HH^n$, to
get
$$
P_\sigmat=\cM_r r^2\Box_{\tilde g}\cM_r^{-1}\in\Diff^2(\HH^n).
$$
(There is a similar setup for the second copy of $\HH^n$ in the past
light cone, where $z_{n+1}<0$.)
Now, $P_\sigmat$ is not well-behaved at the boundary of the
future light cone, but it is closely related to $\tilde P_\sigmat$. Namely, if
we use coordinates
$$
y_j=\frac{z_j}{z_{n+1}},\ j=1,\ldots,n,
$$
on the sphere away from the equator $z_{n+1}=0$,
$$
r=F(y)\rho,\ F(y)=\sqrt{\frac{1-|y|^2}{1+|y|^2}}.
$$
Note that $F^2$ is a smooth function on $\sphere^n$ near (its
intersection with) the light cone which vanishes non-degenerately at
the light cone. On the other hand, the Poincar\'e ball model
$\overline{\HH^n}$ of
$\HH^n$ arises by regarding it as a graph over $\RR^n$ in
$\RR^n\times\RR$, and compactifying $\RR^n$ radially (or geodesically)
to a ball, with boundary defining
function, say, $(z_1^2+\ldots+z_n^2)^{-1/2}$, or, $\rho^{-1}$ -- these
two differ by a smooth positive multiple on $\overline{\HH^n}$. As
$r=1$ on $\HH^n$, this means that $F$ is a valid boundary defining
function in the Poincar\'e model, in contrast with the natural $F^2$
defining function of the light cone. In particular, with $\hat y_j$,
$j=1,\ldots,n-1$, denoting local coordinates on $\sphere^{n-1}$, identified
with $\pa \overline{\HH^n}$, hence the light cone at infinity is
identified with $\sphere^{n-1}$,
pulling back the Minkowski
metric to $\HH^n$, which by definition yields the hyperbolic metric,
a straightforward calculation yields that that
\begin{equation}\label{eq:hyp-metric-form-F}
g=\frac{(dF)^2}{F^2(1-F^2)}+\frac{1-F^2}{2F^2}h(\hat y,d\hat y),
\end{equation}
with $h$ the round metric on the sphere; this satisfies $F^2g$ being a
smooth metric up to the boundary, $F=0$ (with a polar coordinate
singularity at $F=1$; $F$ and $\hat y$ are not valid coordinates
there, though $F$ is still $\CI$ near $F=1$, and the metric is still
$\CI$ there as
well, as can be seen by using valid coordinates), with the coefficients even functions of $F$. The metric $g$ can be put in
the normal form $g=\frac{dx^2+h}{x^2}$ by letting
$x=\frac{F}{1+\sqrt{1-F^2}}$, which is an equivalent boundary defining
function, but this is not necessary here.

Since
$$
\cM_\rho f(\sigmat,y)=\int_0^\infty \rho^{-\imath\sigmat} f\,\frac{d\rho}{\rho},
$$
with a similar formula for $\cM_r$, we have, if we identify $\HH^n$
with an open subset of $\sphere^n$ (the interior of the future light
cone),
\begin{equation}\label{eq:relate-Mellin}
\cM_\rho\rho^2\Box_{\tilde
 g}\cM_\rho^{-1}(\sigmat)=F^{\imath\sigmat-2}\cM_r r^2\Box_{\tilde g}\cM_r^{-1}F^{-\imath\sigmat}.
\end{equation}

We next compute $\cM_r r^2\Box_{\tilde g}\cM_r^{-1}$; this is feasible
since $\RR^+\times\HH^n$ is an orthogonal decomposition relative to
$\tilde g$. Concretely, the Minkowski metric is
$$
\tilde g=dr^2-r^2 g,
$$
where $g$ is the hyperbolic metric, since by definition the hyperbolic
metric {\em is} the {\em negative} of the restriction of the Minkowski metric to the
hyperboloid $\HH^n$. This is a\footnote{Lorentzian, but this does not
  affect these computations.} conic metric, whose Laplacian is
\begin{equation}\label{eq:hyp-cap-Mink}
\Box_{\tilde g}=-r^{-2}\Delta_X-r^{-n}\pa_r r^{n}\pa_r,
\end{equation}
(cf.\ \cite[Equation~(3.8)]{Cheeger:Spectral} for the form version of
the computation). Rewriting this as
$$
r^2\Box_{\tilde g}=-\Delta_X-r^{-n+1}(r\pa_r) r^{n-1}(r\pa_r)=-\Delta_X-(r\pa_r+n-1)(r\pa_r),
$$
the Mellin transform of $r^2\Box_{\tilde g}$ with
respect to $r$ is
\begin{equation*}\begin{aligned}
&\cM_r r^2\Box_{\tilde
  g}\cM_r^{-1}(\tilde\sigma)=-\Delta_X-(\imath\sigmat+n-1)(\imath\sigmat)\\
&\qquad=-\Delta_X+(\sigmat-\imath(n-1))\sigmat=-\Delta_X+(\sigmat-\imath(n-1)/2)^2+(n-1)^2/4,
\end{aligned}\end{equation*}
which shows that it is useful to introduce
$\sigma=\sigmat-\imath(n-1)/2$, corresponding to the conjugation
$$
\cM_r r^{(n-1)/2}r^2\Box_{\tilde
  g}r^{-(n-1)/2}\cM_r^{-1}(\sigma)=-\Delta_X+\sigma^2+(n-1)^2/4.
$$
We remark that \eqref{eq:relate-Mellin} becomes
\begin{equation}\begin{aligned}\label{eq:relate-Mellin-mod}
&\cM_\rho\rho^2\rho^{(n-1)/2}\Box_{\tilde
 g}\rho^{-(n-1)/2}\cM_\rho^{-1}(\sigma)\\
&\qquad=F^{\imath\sigma-(n-1)/2-2}\cM_r
r^{(n-1)/2}r^2\Box_{\tilde
  g}r^{-(n-1)/2}\cM_r^{-1}F^{-\imath\sigma+(n-1)/2}\\
&\qquad=F^{\imath\sigma-(n-1)/2-2}(-\Delta_X+\sigma^2+(n-1)^2/4) F^{-\imath\sigma+(n-1)/2}.
\end{aligned}\end{equation}

We now replace $\HH^n$ with $\dS^n$ in our considerations.
Thus, we work in the region in $\sphere^n$ corresponding to the exterior of the
future and past light cones (the `equatorial belt'), which can be identified with the hyperboloid
$$
\dS^n:\ z_{n+1}^2-(z_1^2+\ldots+z_n^2)=-1,
$$
via the $\RR^+$-quotient. Now
$$
\tilde g=-dr^2+g,
$$
where $g$ is the de Sitter metric. We next
consider the Mellin transform of $r^2\Box_{\tilde g}$
with respect to the decomposition $\RR^+_r\times\dS^n$, to
get
$$
P_\sigmat=\cM_r r^2\Box_{\tilde g}\cM_r^{-1}\in\Diff^2(\dS^n).
$$
Note that
$$
\Box_{\tilde g}=r^{-2}\Box_X+r^{-n}\pa_r r^{n}\pa_r,
$$
in analogy with \eqref{eq:hyp-cap-Mink}, so the Mellin transform of
$r^{(n-1)/2}r^2\Box_{\tilde g}r^{-(n-1)/2}$ with respect to $r$ is
$$
\cM_r r^{(n-1)/2}r^2\Box_{\tilde
  g}r^{-(n-1)/2}\cM_r^{-1}(\sigma)=\Box_X-\sigma^2-(n-1)^2/4.
$$

We can relate this to the spherical Mellin transform by completely analogous
arguments as in the case of $\HH^n$, except that $F$ is replaced by
$$
\tilde F=\sqrt{\frac{|y|^2-1}{|y|^2+1}}=\sqrt{\frac{1-|y|^{-2}}{1+|y|^{-2}}}.
$$
In principle this works only
away from the equator (where one could
use $y$ as coordinates); to see that this in fact works globally,
one should use Euclidean polar coordinates $|z'|$ and $\hat
y=\frac{z'}{|z'|}$ in $\RR^n_{z'}$, and use $|y|^{-1}=\frac{z_{n+1}}{|z'|}$ and
$\hat y$ in $(-1,1)\times\sphere^{n-1}$; the second expression for
$\tilde F$ now shows the desired smooth behavior on $\dS^n$.
Thus,
\begin{equation}\begin{aligned}\label{eq:relate-Mellin-dS}
&\cM_\rho\rho^{(n-1)/2}\rho^2\Box_{\tilde
 g}\rho^{-(n-1)/2}\cM_\rho^{-1}(\sigma)\\
&\qquad=\tilde F^{\imath\sigma-(n-1)/2-2}\cM_r
r^2\Box_{\tilde g}\cM_r^{-1}\tilde F^{-\imath\sigma+(n-1)/2}\\
&\qquad=\tilde F^{\imath\sigma-(n-1)/2-2}(\Box_X-\sigma^2-(n-1)^2/4)\tilde F^{-\imath\sigma+(n-1)/2}
\end{aligned}\end{equation}

\section{Asymptotically Minkowski spaces}\label{sec:conformal}
We now extend the results to the operators induced on the boundary at
infinity of general asymptotically Minkowski
spaces; we further show below how these spaces arise from asymptotically
hyperbolic or de Sitter spaces in a natural way.
Since for us it is the boundary behavior that matters (rather
than the potentially complicated bicharacteristic flow in the interior), it is
convenient to set this up as a homogeneous metric (of degree $2$) on
$\RR^+\times \tilde X$, where $\tilde X$ is a compact manifold; for general
Lorentzian scattering metrics in the sense of
\cite{Baskin-Vasy-Wunsch:Radiation} this is the model at the boundary
of the compactified Lorentzian manifold (thus, we do not need the full
Lorentzian scattering metric setup of
\cite{Baskin-Vasy-Wunsch:Radiation}). Thus, as in
\cite{Baskin-Vasy-Wunsch:Radiation}, but using the product structure,
consider Lorentzian metrics of the form
$$
\tilde g=v\frac{d\rhot^2}{\rhot^4}-\Big(\frac{d\rhot}{\rhot^2}\otimes\frac{\alpha}{\rhot}+\frac{\alpha}{\rhot}\otimes
\frac{d\rhot}{\rhot^2}\Big)-\frac{\check g}{\rhot^2}
$$
where $\rhot=\rho^{-1}$ is the defining function of the boundary at
infinity (so is homogeneous of degree $-1$), $v\in\CI(\tilde X)$, $\alpha$ a
$\CI$ one-form on $\tilde X$, $\alpha|_{v=0}=\frac{1}{2}\,dv$, $\check g$ a
symmetric $\CI$ 2-cotensor on $\tilde X$ which is positive definite on the
annihilator of $dv$; in terms of
$\rho$ this takes the form
\begin{equation}\label{eq:asymp-Mink-homog-form}
\tilde g=v\,d\rho^2+\rho\Big(d\rho\otimes\alpha+\alpha\otimes
d\rho\Big)-\rho^2\check g.
\end{equation}

Such a metric gives rise to an asymptotically hyperbolic manifold
(with multiple connected components under the further assumptions we
make below) in $v>0$, and an asymptotically de-Sitter manifold in
$v<0$ (without the full dynamical hypotheses on these).

To see how the spectral family of the Laplacian, resp.\ the
d'Alembertian, of an {\em even} metric $g=g_\bullet$ on $X=\Xd$ (with
compactification $\overline{\Xd}$), fits into an asymptotically Minkowski
framework, first consider the operator
\begin{equation}\begin{aligned}\label{eq:AH-spect}
&P_\sigma=-\Delta_{\Xd}+\sigma^2+\Big(\frac{n-1}{2}\Big)^2,
\end{aligned}\end{equation}
resp.
\begin{equation}\begin{aligned}\label{eq:dS-spect}
&P_\sigma=\Box_{\Xd}-\sigma^2-\Big(\frac{n-1}{2}\Big)^2,
\end{aligned}\end{equation}
on the space $\Xd$, where $\bullet$ denotes a subscript, such as $+$
or $0$ below.
With $\overline{\Xde}$ the even version of $\overline{\Xd}$, and with
$\xXd$ a boundary defining function of $\overline{\Xd}$, we modify this
to the operator
\begin{equation}\label{eq:P-rel-Xde}
\tilde P_\sigma|_{\Xde}=\xXd^{\imath\sigma-(n-1)/2-2} P_\sigma \xXd^{-\imath\sigma+(n-1)/2},
\end{equation}
which one now checks is the restriction of an operator $\tilde P_\sigma$ defined on an
extension $\tilde X$ of $\Xde$ across $Y=\pa\Xde$, and satisfying the
requirements of \cite{Vasy-Dyatlov:Microlocal-Kerr} and
\cite{Vasy:Microlocal-AH}. This was checked explicitly in
\cite{Vasy:Microlocal-AH}. Note that at the level of the principal
symbol, given by the dual metric {\em function},
this means that $x^{-2}G$
extends smoothly to $T^*\tilde X$, which is automatic for an
even asymptotically hyperbolic metric. One does need to check the
behavior of the lower order terms (which {\em would} be singular
without the conjugation by $\xXd^{-\imath\sigma+(n-1)/2}$, while for
the principal symbol the latter does not matter), but this was again done in \cite{Vasy:Microlocal-AH}.

A different way of proceeding is via extending the metric $g=g_\bullet$ to an
ambient metric, playing the role of the Minkowski metric, which is
homogeneous of degree $2$. Thus, one considers
$M=\RR^+_{\rho}\times\tilde X$, as well as $\RR^+_r\times \Xd$, with
$\bullet=\pm$ for the asymptotically hyperbolic spaces, and with
$r=\xXpm\rho$, so $F=\xXpm$ in the Minkowski setting.
We note, however, that while with $F$ defined above
in the Minkowski setting, the hyperbolic metric has some higher order
(in $x=\xXpm$)
$dx^2=d\xXpm^2$ terms in view of \eqref{eq:hyp-metric-form-F},
these do not affect properties of the extension across $\xXpm=0$. On
$\RR^+_r\times \Xd$ the analogue of the Minkowski metric is
$$
\tilde g=dr^2-r^2 g=r^2\Big(\frac{dr^2}{r^2}-g\Big)=\rho^2\Big(\xXpm^2\Big(\frac{d\rho}{\rho}+\frac{d\xXpm}{\xXpm}\Big)^2-\xXpm^2g\Big).
$$
Substituting the form of $g$ and writing $\xXpm^2=\mu$,
\begin{equation}\label{eq:extend-AH-to-Mink}
\tilde g=\rho^2\Big(\mu
\frac{d\rho^2}{\rho^2}+\frac{1}{2}\Big(\frac{d\rho}{\rho}\otimes
d\mu+d\mu\otimes \frac{d\rho}{\rho}\Big)-h(\mu,\hat y,d\hat y)\Big).
\end{equation}
But now the desired extension is immediate to a neighborhood of
$\Xde$ in $\tilde X$ (which is
all that is required for the analysis if one uses complex absorption
as in
\cite{Vasy-Dyatlov:Microlocal-Kerr,Vasy:Microlocal-AH,Vasy:Analytic-forms}),
by simply extending $h$
smoothly to a neighborhood (i.e.\ from $\mu\geq 0$ to $\mu$ near $0$).
This is easily checked to be Lorentzian, and indeed a special
case\footnote{This assumes that
one ignores the interior of the space carrying a Lorentzian scattering
metric; more precisely it is a special case of the restriction of a
Lorentzian scattering metric to a neighborhood of the boundary of the
compactification of the space.}
of the scattering metrics of \cite{Baskin-Vasy-Wunsch:Radiation} in
view of \eqref{eq:asymp-Mink-homog-form}. Notice that the metric in
$\mu<0$ takes the form, with $\mu=-\xXz^2$,
\begin{equation*}\begin{aligned}
\tilde g&=
\rho^2\Big(-\xXz^2
\frac{d\rho^2}{\rho^2}-\xXz^2\Big(\frac{d\rho}{\rho}\otimes
\frac{d\xXz}{\xXz}+\frac{d\xXz}{\xXz}\otimes \frac{d\rho}{\rho}\Big)-h(-\xXz^2,\hat y,d\hat y)\Big)\\
&=
\rho^2\Big(-\xXz^2\Big(\frac{d\rho}{\rho}+\frac{d\xXz}{\xXz}\Big)^2+\xXz^2g_{X_0}\Big),
\end{aligned}\end{equation*}
with
\begin{equation}\label{eq:induced-gXz}
g_{X_0}=\frac{d\xXz^2-h(-\xXz^2,\hat y,d\hat y)}{\xXz^2},
\end{equation}
i.e.\ $g_{X_0}$ is asymptotically de Sitter, with cross-section metric
given by $h(-\xXz^2,\hat y,d\hat y)$ rather than $h(\xXz^2,\hat
y,d\hat y)$, i.e.\ it is
the extension of $h$ in the first argument across $0$ that enters into
$g_{X_0}$.

The analogous construction also works on asymptotically de Sitter
spaces $(X_0,g)$, $g=g_{X_0}$; one lets
\begin{equation*}
\tilde g=-dr^2+r^2
g=r^2\Big(-\frac{dr^2}{r^2}+g\Big)=\rho^2\Big(-\xXz^2\Big(\frac{d\rho}{\rho}+\frac{d\xXz}{\xXz}\Big)^2+\xXz^2g\Big),
\end{equation*}
which now gives, with $\xXz^2=-\mu$,
\begin{equation}\label{eq:extend-dS-to-Mink}
\tilde g=\rho^2\Big(\mu
\frac{d\rho^2}{\rho^2}+\frac{1}{2}\Big(\frac{d\rho}{\rho}\otimes
d\mu+d\mu\otimes \frac{d\rho}{\rho}\Big)-h(-\mu,\hat y,d\hat y)\Big),
\end{equation}
which is the same formula as \eqref{eq:extend-AH-to-Mink}, except the
appearance of $-\mu$ in the argument of $h$, corresponding to the
relationship between $g_{X_+}$ and $g_{X_0}$ when one started with
$g=g_{X_+}$, as expressed by \eqref{eq:induced-gXz}.

Thus, suppose we have an asymptotically de Sitter metric on a manifold
$(\overline{X_0},g_{X_0})$ with two boundary hypersurfaces $Y_\pm$ and a family of metrics
$\tilde h_\pm$ on $Y_\pm$ depending smoothly in an even fashion on the
boundary defining function $\xXz$ (i.e.\
smoothly on $\xXz^2$), and that $Y_\pm$ bound\footnote{If one starts
  with an $\overline{X_0}$ for which this is not the case, one can
  take two copies of it; the two copies of $Y_+$ bound now the
  manifold $Y_+\times[0,1]$ and similarly with $Y_-$.} manifolds with boundary
$\overline{X_\pm}$. Then one can put an asymptotically hyperbolic metric $g_\pm$ of the
form
$$
\frac{d\xXpm^2+h_\pm(-\xXpm^2,\hat y,d\hat y)}{\xXpm^2}
$$
near $Y_\pm=\pa X_\pm$ (relative to a chosen product decomposition, with a
factor $[0,\ep)_{\xXpm}$ corresponding to the boundary defining
function $\xXpm$) on
$\overline{X_\pm}$, and let $\mu=\xXpm^2$ on $\overline{X_\pm}$. Further,
we define a compact manifold
with boundary by
\begin{equation}\label{eq:tilde-X-const}
\tilde X=\overline{X_{+,\even}}\cup \overline{X_{0,\even}}\cup
\overline{X_{-,\even}},
\end{equation}
with the summands
smoothly identified at
the boundaries using the product decomposition used in transferring
the metric. Then we define a Lorentzian metric $\tilde g$ on
$\RR^+_\rho\times\tilde X$ by the respective form
\eqref{eq:extend-AH-to-Mink}-\eqref{eq:extend-dS-to-Mink} with $h$
understood as $\xXpm^2 g_\pm-d\xXpm^2$, resp.\ $-\xXz^2 g_0+d\xXz^2$
away from a neighborhood of $Y_\pm$; these definitions extend smoothly and consistently to
$\mu=0$ (i.e. $\RR^+\times Y_\pm$).

Returning to the previous discussion, when we started out with
$\overline{X_+}$, we can construct a global space $\tilde X$ by taking
two copies of $\overline{X_+}$, denoting the second copy by
$\overline{X_-}$, letting $Y_\pm=\pa\overline{X_\pm}$, and
$\overline{X_0}=Y_+\times[0,1]_s$, and defining $\tilde X$ as in
\eqref{eq:tilde-X-const}, with the corresponding identifications. This
defines asymptotically de Sitter metrics near the boundaries of
$\overline{X_0}$. Using the product structure on $\overline{X_0}$ this
can be extended to a Lorentzian metric on $X_0$ of a warped product
form $f(s)\,ds^2-h_0(s,\hat y,d\hat y)$ on $(0,1)_s\times Y_+$ with $f>0$, $h_0$
positive definite; note that this matches
the metric near $Y_\pm$ if $h_0$ is appropriately chosen, and all
null-geodesics indeed tend to $Y_\pm$ as the parameter along them
approaches infinity, so indeed this fits into the asymptotically de
Sitter framework described in the introduction.

Now the Mellin transform of $\Box_{\tilde g}$
gives rise to a smooth family of operators $\tilde P_\sigma$ on $\tilde X$, related to
$P_\sigma$ in \eqref{eq:AH-spect}-\eqref{eq:dS-spect} via the same procedure as in the Minkowski
setting. In summary, we have
shown:

\begin{prop}
Given an asymptotically hyperbolic space $(X_+,g_{X_+})$, resp.\ an asymptotically
de Sitter space $(X_0,g_{X_0})$, after possibly replacing
$(X_0,g_{X_0})$ by two copies of the same space, there is a `global' space $\tilde X$, of the form
\eqref{eq:tilde-X-const} with the not already given constituent pieces
asymptotically hyperbolic in case of $(X_\pm,g_{X_\pm})$ and
asymptotically de Sitter in case of $(X_0,g_{X_0})$, and there is an operator
$\tilde P_\sigma\in\Diff^2(\tilde X)$ on $\tilde X$, such that the restriction
of $\tilde P_\sigma$ to $X_\pm$, resp.\ $X_0$, is given by
\eqref{eq:P-rel-Xde}, with $P_\sigma$ as in \eqref{eq:AH-spect},
resp.\ \eqref{eq:dS-spect}.
\end{prop}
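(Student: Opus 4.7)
The plan is to assemble the constructions carried out explicitly in the paragraphs preceding the proposition, and then verify the restriction identity by reducing to the Minkowski computations of Section~\ref{sec:Minkowski}.

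Step one is to produce the missing constituent pieces and form $\tilde X$. Starting from an asymptotically hyperbolic $(X_+, g_{X_+})$, I would take $(X_-, g_{X_-})$ to be a second copy, let $Y_\pm = \pa \overline{X_\pm}$, form $\overline{X_0} = Y_+ \times [0,1]_s$, and equip it with a warped product metric $f(s)\,ds^2 - h_0(s, \hat y, d\hat y)$ as in the excerpt, with $f > 0$ and $h_0$ positive definite chosen so that near $s = 0,\,1$ the metric matches the asymptotically de Sitter form induced near $Y_\pm$. Starting instead from an asymptotically de Sitter $(X_0, g_{X_0})$, after doubling if necessary, one places on the bounding manifolds $\overline{X_\pm}$ asymptotically hyperbolic metrics $g_\pm$ whose transverse profiles, viewed as smooth functions of $\mu$, extend the de Sitter profile smoothly across $\mu = 0$; this amounts to extending a Taylor series across the boundary. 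In either case $\tilde X$ is defined by \eqref{eq:tilde-X-const} using product collars.

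Step two is to build $\tilde g$ and pass to the Mellin transform. On $\RR^+_\rho \times \overline{X_{\pm,\even}}$ I set $\tilde g$ by \eqref{eq:extend-AH-to-Mink} with $\mu = \xXpm^2$, and on $\RR^+_\rho \times \overline{X_{0,\even}}$ by \eqref{eq:extend-dS-to-Mink} with $\mu = -\xXz^2$. These two expressions agree as functions of $(\rho, \mu, \hat y)$ and their differentials, differing only in which square-root coordinate ($\xXpm$ or $\xXz$) parameterizes $|\mu|$; since by step one the profile $h$ extends smoothly as a single function of $\mu$ across $\mu = 0$, the pieces glue to a smooth Lorentzian metric on $M = \RR^+_\rho \times \tilde X$ of the asymptotically Minkowski form \eqref{eq:asymp-Mink-homog-form}. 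I would then define $\tilde P_\sigma$ as the conjugated Mellin transform $\cM_\rho \rho^{(n-1)/2} \rho^2 \Box_{\tilde g} \rho^{-(n-1)/2} \cM_\rho^{-1}(\sigma)$; since $\tilde g$ is homogeneous of degree $2$ in $\rho$, this yields a smooth family in $\Diff^2(\tilde X)$.

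Step three is the restriction identity, which follows by rerunning the Minkowski calculation of Section~\ref{sec:Minkowski} on each piece. On $X_\pm$, $\tilde g$ has the warped product form $dr^2 - r^2 g_{X_\pm}$ with $r = \xXpm \rho$, so the identity \eqref{eq:relate-Mellin-mod} applies with $F = \xXpm$ and yields \eqref{eq:P-rel-Xde} with $P_\sigma$ as in \eqref{eq:AH-spect}; on $X_0$, $\tilde g = -dr^2 + r^2 g_{X_0}$ with $r = \xXz \rho$, and \eqref{eq:relate-Mellin-dS} gives \eqref{eq:P-rel-Xde} with $P_\sigma$ as in \eqref{eq:dS-spect}. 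The main obstacle is the smooth gluing across $\mu = 0$ carried out in step one: in the de Sitter-to-hyperbolic direction it amounts to Borel-summing the Taylor series of the de Sitter profile in $\mu$ across the boundary; in the hyperbolic-to-de Sitter direction one must build $h_0$ globally on $[0,1]_s$ so as to interpolate smoothly between the Borel sums at the two endpoints. Both are routine, and once completed the computations of Section~\ref{sec:Minkowski} close out the proof.
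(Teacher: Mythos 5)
Your proof proposal follows essentially the same route as the paper: the discussion preceding the proposition in Section~\ref{sec:conformal} is precisely the argument the author gives, after which he writes ``In summary, we have shown.'' Your three steps --- constructing the missing constituent pieces with a suitably extended cross-section metric, assembling \eqref{eq:extend-AH-to-Mink} and \eqref{eq:extend-dS-to-Mink} into a single homogeneous Lorentzian metric on $\RR^+_\rho\times\tilde X$, and then reducing the restriction identity to the conic Mellin calculations \eqref{eq:relate-Mellin-mod} and \eqref{eq:relate-Mellin-dS} with $r=\xXd\rho$ --- are exactly the paper's.

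One small imprecision worth correcting: the gluing across $\mu=0$ is not Borel summation. Because the metric is even, the cross-section profile $h$ is an actual smooth function of $\mu$ on a closed half-interval, and what is required is simply a smooth (Whitney/Seeley-type) extension of that function across $\mu=0$; the paper just says ``extending $h$ smoothly to a neighborhood.'' Borel's lemma would produce a smooth function realizing the Taylor jet of $h$ at $\mu=0$ but would not automatically agree with $h$ on the original half-line, so it is not quite the right tool. This does not affect the substance of your outline.
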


The requirements for the
analysis of $\tilde P_\sigma$ in \cite{Vasy-Dyatlov:Microlocal-Kerr}
involve the principal symbol globally as well as the imaginary part of
the subprincipal
symbol at $N^*Y_\pm$, with the latter entering since they determine the
threshold regularity at radial points. Further, if one wants to obtain high energy
estimates, letting $|\sigma|\to\infty$ in strips $|\im\sigma|<C$, one
also needs information on the principal symbol in the high
energy/large parameter sense. Here we do not address the latter (it
involves e.g.\ the non-trapping nature of the asymptotically
hyperbolic spaces), but mention that these are encoded in the
b-principal symbol of $\Box_{\tilde g}$ (which is the dual metric function), and indeed even the
$\sigma$-dependence of the subprincipal symbol can be read off from
the b-principal symbol of $\Box_{\tilde g}$.

The requirements on the principal symbol are satisfied in view of the
limiting behavior of the null-geodesics on the asymptotically de
Sitter space; apart from the behavior of the latter, the other
requirements were all checked in
\cite[Section~4]{Vasy-Dyatlov:Microlocal-Kerr} and
\cite[Section~3]{Vasy:Microlocal-AH}; the complex absorption added
there is not needed as we regard one of the radial sets $N^*Y_+$ and
$N^*Y_-$ as the region from which we start propagating estimates, the
other as the region
towards which we propagate estimates, as was done in the recent work \cite[Section~5]{Baskin-Vasy-Wunsch:Radiation}.
Thus, what is left is finding the subprincipal symbol at $N^*Y_\pm$,
and what is left in this
is finding a
$\sigma$-independent constant, which again, at most shifts by a
constant what function spaces should be used in the Fredholm
analysis. In turn, this constant can be found by
formal self-adjointness considerations as it is the principal symbol of
$\frac{1}{2\imath}(\tilde P_\sigma-\tilde P_\sigma^*)$ at the radial
set. The latter
vanishes for $\sigma$ real, as $\rho^2\rho^{(n-1)/2}\Box_{\tilde
  g}\rho^{-(n-1)/2}$ is formally self-adjoint with respect to the $\RR^+$-invariant
b-density $\rho^{-(n+1)}\,d\tilde g$, hence the Mellin transform is
formally self-adjoint for $\sigma$ real with respect to a density
$\omega$ on $\tilde X$ such that $\rho^{-(n+1)}\,d\tilde
g=\frac{d\rho}{\rho}\omega$ (cf.\ \cite[Section~3.3]{Vasy-Dyatlov:Microlocal-Kerr}).
It is actually instructive to compute this subprincipal symbol (rather
than just its imaginary part)
at $N^*Y$, $Y=Y_+\cup Y_-$, cf.\ \cite[Section~3]{Vasy:Analytic-forms} for the general setting of
differential forms;
one obtains that, with $\Vf_b(\tilde X;Y)$ denoting set of vector
fields on $\tilde X$ tangent to $Y$,
\begin{equation*}
\cM_\rho\rho^2\Box_{\tilde g}\cM_\rho^{-1}=(4\pa_\mu\mu\pa_\mu-4(\imath\tilde\sigma+(n-1)/2)\pa_\mu)+Q,
\ Q\in \Vf_b^2(\tilde X;Y),
\end{equation*} 
or
\begin{equation}\begin{aligned}\label{eq:tilde-P-sigma-form}
\tilde P_\sigma&=\cM_\rho\rho^2\rho^{(n-1)/2}\Box_{\tilde
  g}\rho^{-(n-1)/2}\cM_\rho^{-1}\\
&=(4\pa_\mu\mu\pa_\mu-4\imath\sigma\pa_\mu)+Q,
\ Q\in \Vf_b^2(\tilde X;Y).
\end{aligned}\end{equation}
This means $(\mu\pm \imath 0)^{\imath\sigma}$ are approximate elements
of the distributional kernel of $\tilde P_\sigma$ (in that they solve
$\tilde P_\sigma u=0$ modulo two orders better, namely smooth
multiples of $(\mu\pm \imath 0)^{\imath\sigma}$, than a priori
expected in view of the second order nature of $\tilde P_\sigma$: one
order of gain comes from $N^*Y$ being characteristic for the operator
and $(\mu\pm \imath 0)^{\imath\sigma}$ is conormal to this, but the
second order gain encodes the correct behavior of the subprincipal
symbol. Note that these distributions lie in $H^s$ for
$s<-\im\sigma+1/2$. Since in our global problem we are interested in
solutions of $\tilde P_\sigma u=f$ which are smooth at the future
light cone, $Y_+=\pa_+X_0$, if $f$ is smooth, we need to propagate
estimates from $Y_+=\pa_+X_0$ to $Y_-=\pa_- X_0$, and thus we need to use
Sobolev spaces which are stronger than the above threshold regularity,
$-\im\sigma+1/2$, at $Y_+=\pa_+X_0$, but are weaker than it at
$Y_-=\pa_-X_0$. Thus, as in
\cite[Section~5]{Baskin-Vasy-Wunsch:Radiation}, see also the Appendix
of that paper, we need variable order Sobolev spaces $H^s$, where $s$
is a $\CI$ function on $S^*\tilde X$ (though in this case one can take it to
be a function simply on $\tilde X$), corresponding to
$s_{\mathrm{past}}$ of \cite[Section~5]{Baskin-Vasy-Wunsch:Radiation},
so
\begin{enumerate}
\item
$s|_{N^*\pa_+X_0}>1/2-\im\sigma$, constant near $N^*\pa_+X_0$,
\item
$s|_{N^*\pa_-X_0}<1/2-\im\sigma$, constant near $N^*\pa_-X_0$,
\item
$s$ is monotone along the null-bicharacteristics (which all go from
$N^*\pa_+X_0$ to $N^*\pa_-X_0$ or vice versa).
\end{enumerate}
Then the spaces for Fredholm analysis are
\begin{equation}\label{eq:Pt-sigma-spaces}
\tilde P_\sigma:\cX^s\to\cY^{s-1},\ \cX^s=\{u\in H^s:\ \tilde P_\sigma u\in
H^{s-1}\},
\ \cY^{s-1}=H^{s-1},
\end{equation}
thus $\tPsb:\cY^{s-1}\to\cX^s$ is a meromorphic Fredholm family;
see \cite[Section~5]{Baskin-Vasy-Wunsch:Radiation} for details. Here
the subscript `$\past$' is added to denote the function spaces we are
using, which amounts to propagating regularity towards the past, i.e.\
$\pa_-X_0$: reversing the roles of $\pa_+X_0$ and $\pa_- X_0$ in the
definition of the function spaces would result in the the future
solution operator $\tPsf$.

\section{The global operator and the conformally compact spaces}\label{sec:relationship}

The
solution operator $\tPsb$ considered above now gives the solution operator
for the backward Cauchy problem for the spectral family of
$\Box_{X_0}$ as well as the resolvent for
$\Delta_{X_\pm}$.
This connection has been explored in
\cite{Vasy-Dyatlov:Microlocal-Kerr} and \cite{Vasy:Microlocal-AH} in
the asymptotically hyperbolic and de Sitter setting (the two setting
considered separately), and in \cite{Baskin-Vasy-Wunsch:Radiation} in
this generality (except that a compact $M$ was taken satisfying
various additional non-trapping conditions, but for the purposes of
the discussion here the latter are irrelevant). Here we expand this
discussion and include the Poisson operators and scattering operators
in it; the latter enter in perhaps surprising ways.

Sometimes
we write $\xXzpm$ for the boundary defining function
when we work near the future and past
boundaries $\pa_\pm X_0$ of de Sitter space to emphasize the local
nature of the expansion; these are understood to
be equal to $\xXz$ near the relevant boundary $\pa_\pm X_0$. {\em Further,
as the only smooth structure used below is the even one (corresponding
to the restriction of the smooth structure of $\tilde X$), below $\CI(\overline{\Xd})$ stands for
$\CI(\overline{\Xde})$, $\bullet=+,-,0$, unless otherwise noted.}

To elaborate on the connection mentioned above, concretely one
has, e.g.\ on $\CI_c(X_+)$, for $\im\sigma\gg 0$,
\begin{equation}\begin{aligned}\label{eq:global-to-hyp-inv}
&\cR_{X_+}(\sigma)=\Big(-\Delta_{X_+}+\sigma^2+\Big(\frac{n-1}{2}\Big)^2\Big)^{-1}\\
&\qquad=\xXp^{-\imath\sigma+(n-1)/2}\tPsb\xXp^{\imath\sigma-(n-1)/2 -2},\\
\end{aligned}\end{equation}
where the inverse on the left hand side is the inverse given by the
essential self-adjointness (on $\CI_c(X_+)$) and positivity of
$\Delta_{X_+}$. Notice that then the equality of the extreme left and
right hand sides holds for all $\sigma\in\Cx$ as the equality of
meromorphic families; alternatively, as in \cite{Vasy:Microlocal-AH}
the right hand side can be used to {\em define} the analytic
continuation of the resolvent of $\Delta_{X_+}$, i.e.\ $\cR_{X_+}(\sigma)$.
On the other hand,
on $\CI_c(X_0)$ the {\em backward}, or {\em past-oriented}, solution operator $\cR_{X_0,\past}(\sigma)$ is given by
\begin{equation}\begin{aligned}\label{eq:global-to-dS-inv}
&\cR_{X_0,\past}(\sigma)=\Big(\Box_{X_0}-\sigma^2-\Big(\frac{n-1}{2}\Big)^2\Big)^{-1}\\
&\qquad=\xXz^{-\imath\sigma+(n-1)/2}\tPsb\xXz^{\imath\sigma-(n-1)/2 -2}.
\end{aligned}\end{equation}

The former, \eqref{eq:global-to-hyp-inv}, was extensively discussed in
\cite{Vasy-Dyatlov:Microlocal-Kerr} and \cite{Vasy:Microlocal-AH}:
applied to $f\in\CI_c(X_+)$, both sides give an element of $L^2(X_+,dg_+)$ when
$\im\sigma\gg 0$ since $\tPsb$ maps into
$\CI(\overline{X_+})$, and in view of \eqref{eq:relate-Mellin-mod}
both sides satisfy that
$-\Delta_{X_+}+\sigma^2+\Big(\frac{n-1}{2}\Big)^2$ applied to them
yields $f$; since there is a unique element of $L^2(X_+,dg_+)$ with
this property, the claim follows.

To check the latter claim, \eqref{eq:global-to-dS-inv}, we first note that
\begin{equation}\label{eq:supp-map-P-1}
f\in\CI_c(X_0)\Rightarrow \supp \tPsb\xXz^{\imath\sigma-(n-1)/2 -2} f\cap
\overline{X_+}=\emptyset.
\end{equation}
We give two different arguments for
this. One is essentially a
direct application of Proposition~3.9 of
\cite{Vasy-Dyatlov:Microlocal-Kerr}. This proposition uses complex
absorption, but in a way that makes the proof go through without
changes in our setting: $Q_\sigma$ enters there only to make the
$P_\sigma$ into a Fredholm family, which we have here through control
of the global dynamics. The conclusion is that, using $-\mu$ as the
time function $\mathfrak{t}$ of \cite{Vasy-Dyatlov:Microlocal-Kerr}
near $\pa_+X_0$ (where it is time-like in $X_0$), $\tPsb$
propagates supports forward in $\mathfrak{t}$, i.e.\ backwards in
$\mu$, giving the desired conclusion. For an alternative proof of
\eqref{eq:supp-map-P-1} note that for
$f\in\CI_c(X_0)$, $\xXz^{\imath\sigma-(n-1)/2 -2}f$ vanishes in
$X_+$. Thus, $\tPsb\xXz^{\imath\sigma-(n-1)/2 -2} f$ also
vanishes there since this restriction is given by $\cR_{X_+}(\sigma)$
(the analytic continuation of the resolvent of $\Delta_{X_+}$, with
argument as in \eqref{eq:global-to-hyp-inv})
applied to the function $0$ by what we have shown.
But $\tPsb\xXz^{\imath\sigma-(n-1)/2 -2} f$ is $\CI$
near $\pa X_+=\pa_+ X_0$ (the future boundary of de Sitter space), and
thus the restriction to $\overline{X_0}$ vanishes to infinite order at
$\pa_+ X_0$, so the same remains true after multiplication by $\xXz^{-\imath\sigma+(n-1)/2}$. Calling the result $u$, which thus
satisfies $(\Box_{X_0}-\sigma^2-(\frac{n-1}{2})^2)u=f$, a slight modification of
\cite[Proposition~5.3]{Vasy:De-Sitter} gives that (for $f$ compactly
supported in $X_0$) $u$ vanishes identically near $\pa_+X_0$. The
slight modification we are referring to is that as stated,
\cite[Proposition~5.3]{Vasy:De-Sitter}
applies only for real
$\sigma$, but as the spectral variable is semiclassically
two orders below the principal term, it does not affect the Carleman
estimate argument presented there (it affects the error term $R_2$ in the proof by a
term in $h^2\Diff^0_{0,h}(X_0)$ with the notation of that paper, which
does not change the fact that $R_2$ is in the class stated
there). Note that the notion of semiclassicality is very different in
this Carleman estimate of
\cite{Vasy:De-Sitter} from that of \cite{Vasy-Dyatlov:Microlocal-Kerr}
since it is semiclassicality with respect to an exponential conjugation parameter, not $|\sigma|^{-1}$. Returning to $u$, this proves that $u$ is the backward
solution for the de Sitter Klein-Gordon equation.

To complete the picture, consider also when $f$ is supported in
$X_-$. To be clear we write $\mu_-$ for its boundary
defining function (which is positive in $X_-$), and we similarly write
$\xXm$, etc. Then by our argument thus far, $\tPsb\xXm^{\imath\sigma-(n-1)/2 -2} f$ vanishes
outside $\overline{X_-}$, i.e.\ is supported in
$\overline{X_-}$. Further, just under the assumption that
$f\in\CI(\tilde X)$
(i.e.\ without support assumptions),
$u=\tPsb f$ has $\WF(u)\subset N^*\pa X_-$, and indeed
has an expansion there, see \cite[Corollary~6.9]{Baskin-Vasy-Wunsch:Radiation},
namely if $\imath\sigma\notin\ZZ$ then
\begin{equation}\begin{aligned}\label{eq:expansion-at-outgoing-rad-set}
&u=\vXtmp+\vXtmm+\vXtmz,\\
&\vXtmpm=\aXtmpm (\mu_-\pm \imath 0)^{\imath\sigma},\qquad \aXtmpm,\vXtmz\in\CI(\tilde X).
\end{aligned}\end{equation}
Note that there is a sign switch in
\cite[Corollary~6.9]{Baskin-Vasy-Wunsch:Radiation} in $\sigma$
compared to the setting here; this is due to the use of a homogeneous
degree $1$ function in defining the Mellin transform here and its
reciprocal, i.e.\ a homogeneous degree $-1$ function (thus a defining
function of the boundary of the radial compactification of the space-time), being used in
\cite{Baskin-Vasy-Wunsch:Radiation} to perform the Mellin
transform. Also, if $\imath\sigma\in\ZZ$, logarithmic terms appear in
the expression corresponding to the fact that $(\mu_-\pm
\imath 0)^{\imath\sigma+k}$ is $\CI$ if $\imath\sigma+k$ is a non-negative
integer; this property of being $\CI$ shows up as an obstacle in the construction of
\cite{Baskin-Vasy-Wunsch:Radiation} for $k\geq 0$ integer, hence the
restriction $\imath\sigma\notin\ZZ$ here (though the general case can
also be treated).
Again with $\imath\sigma\notin\ZZ$, the first two terms can be rewritten
in terms of the distributions $(\mu_-)_\pm^{\imath\sigma}$, of which
$(\mu_-)_+^{\imath\sigma}$ is supported in $\overline{X_-}$.
Thus, for $f$ supported in $X_-$, the fact that $u$ is supported in
$\overline{X_-}$ implies, apart from integer coincidences,
that\footnote{Indeed, the $(\mu_-)_+^{\imath\sigma}$ term has the
  desired support property, so one is reduced to observing that the
  sum
of a $\CI$ multiple, say $\phi$, of $(\mu_-)_-^{\imath\sigma}$
  and a $\CI$ function, say $\psi$, is actually $\CI$ if it is supported in
  $\overline{X_-}$, and thus can be written as a multiple (with
  vanishing derivatives at $\pa X_-$) of
  $(\mu_-)_-^{\imath\sigma}$. Indeed, if the sum is so supported, the mismatch in the powers of
  the Taylor series of $\phi$ and $\psi$ at $\pa X_-$ due to $\imath\sigma$
  non-integral shows that both Taylor series vanish at $\pa X_-$, so
  the summands are in fact both $\CI$, and thus so is the sum, as desired.}
$u=b(\mu_-)_+^{\imath\sigma}$. Correspondingly, 
$\tilde u=\xXm^{-\imath\sigma+(n-1)/2} u|_{X_-}$ satisfies
$$
\Big(-\Delta_{X_-}+\sigma^2+\Big(\frac{n-1}{2}\Big)^2\Big)\tilde u=f,
$$
and
$$
\tilde u=\xXm^{\imath\sigma+(n-1)/2} \tilde a,\ \tilde a\in\CI(\overline{X_-}).
$$
Now, for $\im\sigma\ll 0$ this gives that
\begin{equation}\label{eq:global-to-X_--inv}
\cR_{X_-}(-\sigma) f=\xXm^{-\imath\sigma+(n-1)/2}\tPsb\xXm^{\imath\sigma-(n-1)/2 -2} f;
\end{equation}
this then holds in general in the sense of meromorphic Banach space
valued operators, even near $\imath\sigma\in\ZZ$. Notice that the
right hand side gives an independent way of analytically continuing
$\cR_{X_-}(-\sigma)$, similarly to how \eqref{eq:global-to-hyp-inv}
gives the analytic continuation of $\cR_{X_+}(\sigma)$ from
$\im\sigma\gg 0$. In summary, we have shown:

\begin{prop}\label{prop:global-to-pieces}(See \cite[Proposition~7.3]{Baskin-Vasy-Wunsch:Radiation}.)
For any $\sigma$ for which $\tilde P(\sigma)$ is invertible, the
resolvents $\cR_{X_+}(\sigma)$, $\cR_{X_-}(-\sigma)$ and the backward
solution operator $\cR_{X_0,\past}(\sigma)$ are determined by $\tilde
P(\sigma)$; in particular they are regular at these points.
\end{prop}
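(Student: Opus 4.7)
The plan is to assemble the three explicit identities already established earlier in this section, each realizing one of $\cR_{X_+}(\sigma)$, $\cR_{X_-}(-\sigma)$, $\cR_{X_0,\past}(\sigma)$ as a conjugate of $\tPsb$ by a power of the relevant boundary defining function. Once these are in hand, invertibility of $\tilde P_\sigma$ at a given point automatically provides regularity (and formulas) for each of the three constituent operators, since the conjugation operations are entire in $\sigma$.

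First I would quote \eqref{eq:global-to-hyp-inv}: the identity $\cR_{X_+}(\sigma)=\xXp^{-\imath\sigma+(n-1)/2}\tPsb\xXp^{\imath\sigma-(n-1)/2-2}$ is obtained for $\im\sigma\gg 0$ by checking that both sides produce the unique $L^2(X_+,dg_+)$ solution of the spectral equation, and then extended to all $\sigma$ by meromorphic continuation of both sides. Next I would invoke \eqref{eq:global-to-dS-inv} for the asymptotically de Sitter piece; this uses the support statement \eqref{eq:supp-map-P-1} (proved either by forward propagation of supports in the time function $-\mu$ as in Proposition~3.9 of \cite{Vasy-Dyatlov:Microlocal-Kerr}, or by the alternative route exploiting smoothness at $\pa X_+=\pa_+X_0$ together with the identity on $X_+$ just established), and a Carleman-type estimate adapted from \cite[Proposition~5.3]{Vasy:De-Sitter} to obtain vanishing of the candidate solution near $\pa_+X_0$, thereby identifying it with the past solution of the de Sitter Klein--Gordon equation. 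Finally I would handle $\cR_{X_-}(-\sigma)$ via \eqref{eq:global-to-X_--inv}: for $f$ supported in $X_-$, the radial-point expansion \eqref{eq:expansion-at-outgoing-rad-set} near $\pa X_-$ combined with support in $\overline{X_-}$ and the noninteger mismatch of fractional powers under $\imath\sigma\notin\ZZ$ forces $u=b(\mu_-)_+^{\imath\sigma}$; conjugating back by $\xXm^{-\imath\sigma+(n-1)/2}$ and comparing in $\im\sigma\ll 0$ yields the identity, which then extends meromorphically across $\imath\sigma\in\ZZ$.

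The main obstacle — already surmounted in the preceding discussion — is isolating the correct boundary asymptotic contribution to $\tPsb\xXd^{\imath\sigma-(n-1)/2-2}f$ at each piece, which requires the combination of microlocal support propagation, the Carleman estimate, and the conormal expansion at the radial sets. Granting these three identities, the proposition is immediate: at any $\sigma$ where $\tPsb$ exists as a bounded operator on the spaces of \eqref{eq:Pt-sigma-spaces}, each of the three right-hand sides is a well-defined composition, hence each constituent resolvent or past solution operator is likewise regular at $\sigma$ and is manifestly determined by $\tPsb$.
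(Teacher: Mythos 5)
Your proposal reproduces the paper's own argument: it assembles the identities \eqref{eq:global-to-hyp-inv}, \eqref{eq:global-to-dS-inv} and \eqref{eq:global-to-X_--inv} by exactly the means the paper uses ($L^2$ uniqueness plus meromorphic continuation for $X_+$; support propagation via \eqref{eq:supp-map-P-1} and the Carleman estimate for $X_0$; the radial-set expansion \eqref{eq:expansion-at-outgoing-rad-set} with the noninteger-exponent mismatch for $X_-$), and then reads off the proposition. This matches the paper's reasoning in structure and in every essential step.
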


We want to have a converse result as well, namely that the poles of
$\tilde P(\sigma)$ are a subset of poles associated to operators on
$X_\pm$ and $X_0$ apart from possible issues when
$\imath\sigma\in\ZZ$.
In order to do this, it is useful to consider
solution operators for the homogeneous PDE, i.e.\ where non-trivial
boundary data are specified -- these are the so-called Poisson
operators. We recall that
$$
\pa_+X_0=\pa X_+,\ \pa_-X_0=\pa X_-.
$$

First, given $\aXtpmz\in\CI(\pa X_0)$ and $\imath\sigma\notin\ZZ$ one can easily write down
approximate solutions of the form
\begin{equation}\label{eq:tilde-form}
\vXtpm=\aXtpm (\mu\pm \imath 0)^{\imath\sigma},\ \aXtpm|_{\pa X_0}=\aXtpmz,\
\aXtpm\in\CI(\tilde X),
\end{equation}
i.e.\ such that
$$
\tilde P_\sigma \vXtpm\in\CI(\tilde X);
$$
see \cite[Lemma~6.4]{Baskin-Vasy-Wunsch:Radiation} for details (which in turn essentially
follows \cite{RBMSpec}); the Taylor series of $\aXtpm$ at $\pa X_0$
are determined by $\aXtpmz$. Note
that the Taylor series of
$\aXtpm$ at $\pa X_0$
is determined locally (in the strong sense that any Taylor coefficient depends only
on finitely many derivatives of $\aXtpmz$ evaluated at the same point), so in particular if $\aXtpmz|_{\pa_- X_0}=0$
then $\aXtpm$ vanishes to infinite order at $\pa_-X_0$.

Similarly, purely from the perspective of
$X_\pm$ and $X_0$, given $\aXpmpmz\in\CI(\pa X_\pm)$,
$\aXzpmz\in\CI(\pa X_0)$ one can construct
\begin{equation}\begin{aligned}\label{eq:hat-check-form}
&\vXpmpm=\aXpmpm \xXpm^{(n-1)/2\pm \imath\sigma},\qquad \aXpmpm|_{\pa
 X_\pm}=\aXpmpmz,\ \aXpmpm\in\CI(\overline{X_\pm}),\\
&\vXzpm=\aXzpm \xXz^{(n-1)/2\pm \imath\sigma},\qquad \aXzpm|_{\pa
 X_0}=\aXzpmz, \ \aXzpm\in\CI(\overline{X_0}),
\end{aligned}\end{equation}
with
\begin{equation*}\begin{aligned}
&\Big(-\Delta_{X_\pm}+\sigma^2+\Big(\frac{n-1}{2}\Big)^2\Big)\vXpmpm=\fXpmpm\in\dCI(\overline{X_\pm}),\\
&\Big(\Box_{X_0}-\sigma^2-\Big(\frac{n-1}{2}\Big)^2\Big)\vXzpm=\fXzpm\in\dCI(\overline{X_0}).
\end{aligned}\end{equation*}
Note the distinction: while on $\tilde X$ `trivial' or `residual'
functions are those in $\CI(\tilde X)$ (with no vanishing specified
anywhere), on $\Xd$ they are those in $\dCI(\Xd)$ (i.e.\ with infinite
order vanishing at the boundary).

We make the following observation:

\begin{lemma}\label{lemma:smooth-match}
Regarded as smooth functions on $\overline{X_+}$, resp.\
$\overline{X_0}$ (with the even structure, i.e.\ of $\mu=\xXp^2$, resp.\
$-\mu=\xXz^2$ rather than $\xXp$ and $\xXz$), at $\pa
X_+=\pa_+X_0$, if $\aXppmz=\aXzpmz$ then $\aXppm$ and $\aXzpm$ have
the matching Taylor series as functions in $\mu\geq 0$, resp.\
$\mu\leq 0$ (i.e.\ the even coefficients are the same, the odd coefficients have
opposite signs).
\end{lemma}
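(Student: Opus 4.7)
The plan is to exhibit both $\aXppm$ and $\aXzpm$ as restrictions, modulo infinite-order vanishing at $Y$, of a single $\CI(\tilde X)$ function to the two sides $\{\mu\geq 0\}$ and $\{\mu\leq 0\}$ of $Y$, using the ambient operator $\tilde P_\sigma$ from \eqref{eq:tilde-P-sigma-form}, and then to read off the Taylor series in $\mu$ directly. The key input is the construction of \cite[Lemma~6.4]{Baskin-Vasy-Wunsch:Radiation}: for each prescribed $h\in\CI(Y)$ there is, modulo $\dCI$ at $Y$, a unique $\aXtp\in\CI(\tilde X)$ with $\aXtp|_Y=h$ whose Taylor series at $Y$ is determined by the nondegenerate indicial recurrence $4j(j+\imath\sigma)a_j=(\text{lower-order contribution from }Q)$, $j\geq 1$ (nondegenerate since $\imath\sigma\notin\ZZ$), such that $\aXtp(\mu+\imath 0)^{\imath\sigma}$ is a solution of $\tilde P_\sigma$ modulo $\CI(\tilde X)$; there is an analogous $\aXtm\in\CI(\tilde X)$ coming from the smooth indicial root $0$ with recurrence $4j(j-\imath\sigma)b_j=\ldots$, satisfying $\tilde P_\sigma\aXtm\in\CI(\tilde X)$.

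For the $+$ branch I would restrict $\aXtp(\mu+\imath 0)^{\imath\sigma}$ to each side of $Y$ and invoke the conjugation \eqref{eq:P-rel-Xde}. On $X_+$, where $\mu=\xXp^2>0$ and $(\mu+\imath 0)^{\imath\sigma}=\xXp^{2\imath\sigma}$, this identifies $\aXpp=\aXtp|_{X_+}$ with boundary value $h$. On $X_0$, where $\mu=-\xXz^2<0$ and $(\mu+\imath 0)^{\imath\sigma}=e^{-\pi\sigma}\xXz^{2\imath\sigma}$, it identifies $\aXzp=e^{-\pi\sigma}\aXtp|_{X_0}$ with boundary value $e^{-\pi\sigma}h$. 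To implement the hypothesis $\aXppz=\aXzpz=f$, I would take $h=f$ for the $X_+$ side, and for the $X_0$ side use the ambient generator with boundary value $e^{\pi\sigma}f$; by linearity of the recurrence, this second generator is $e^{\pi\sigma}\aXtp$ identically up to $\dCI$ at $Y$, so the $e^{-\pi\sigma}$ branch factor cancels and $\aXzp$ coincides with $\aXtp|_{X_0}$ to infinite order at $Y$. Hence $\aXpp$ and $\aXzp$ share, to infinite order at $\mu=0$, the Taylor series $\sum_j a_j\,\mu^j$ of the single smooth function $\aXtp$. Re-expressed in the native even coordinates $\xXp^2=\mu\geq 0$ on $X_+$ and $\xXz^2=-\mu\geq 0$ on $X_0$, the coefficient of $(\xXp^2)^j$ in $\aXpp$ is $a_j$, while the coefficient of $(\xXz^2)^j$ in $\aXzp$ is $(-1)^j a_j$, which is exactly the asserted parity pattern.

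The $-$ branch is parallel and easier, since $\aXtm$ carries no branch factor: $\aXpm=\aXtm|_{X_+}$ and $\aXzm=\aXtm|_{X_0}$ directly, and the same Taylor-parity conclusion follows verbatim. The only real obstacle is the bookkeeping of the $e^{\pm\pi\sigma}$ factor in the $+$ branch coming from $(\mu+\imath 0)^{\imath\sigma}|_{\mu<0}$; the essential point is that this factor is a multiplicative constant and the indicial recurrence is linear/homogeneous, so it is absorbed by rescaling the ambient generator, exhibiting both Poisson coefficients as restrictions of one globally smooth function on $\tilde X$. Once this is in place, the matching of the Taylor series in $\mu$ is automatic from the smoothness of $\aXtp$ across $Y$.
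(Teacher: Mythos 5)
Your proposal is correct and rests on the same underlying mechanism as the paper: the Taylor coefficients of the Poisson data in $\mu$ are determined by the nondegenerate indicial recurrence of $\tilde P_\sigma$ at $Y=\pa X_+$ (nondegeneracy requiring $\imath\sigma\notin\ZZ$), which is a purely formal/local computation at $\mu=0$ and hence identical on the two sides of $Y$; once the boundary values agree, the $\mu$-Taylor series agree, and re-expressing $\mu=\xXp^2$ vs.\ $-\mu=\xXz^2$ produces exactly the claimed sign alternation. Where you differ from the paper is in the treatment of the $+$ branch: you work directly with the global formal solution $\aXtp(\mu+\imath 0)^{\imath\sigma}$ of \cite[Lemma~6.4]{Baskin-Vasy-Wunsch:Radiation} and then track the $e^{\pm\pi\sigma}$ branch factor that appears upon restricting to $\mu<0$, rescaling the ambient generator to cancel it. The paper avoids this bookkeeping entirely by observing that the constituent operators $-\Delta_{X_\pm}+\sigma^2+\bigl(\tfrac{n-1}{2}\bigr)^2$ and $\Box_{X_0}-\sigma^2-\bigl(\tfrac{n-1}{2}\bigr)^2$ are even in $\sigma$, so the $+$ branch at parameter $\sigma$ is literally the $-$ branch at parameter $-\sigma$; replacing $\tilde P_\sigma$ by $\tilde P_{-\sigma}$ then makes $\aXppm$ and $\aXzpm$ land in the \emph{smooth} part of the approximate nullspace of $\tilde P_{\mp\sigma}$, with no branch cut to track. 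Both routes are valid; the paper's is cleaner, but your argument correctly exhibits both coefficients (up to $\dCI$ at $Y$) as restrictions of a single $\CI(\tilde X)$ function, from which the parity pattern is immediate.
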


Note that $X_+$ can be replaced by $X_-$ in this lemma.

\begin{proof}
We consider $\aXpmz$ and $\aXzmz$. We notice that in view of the
(modified) conjugation relating $\tilde P_\sigma$ to
$-\Delta_{X_+}+\sigma^2+(n-1)^2/4$ on the one hand and
$\Box_{X_0}-\sigma^2-(n-1)^2/4$ on the other, these both solve $\tilde
P_\sigma|_{\overline{X_+}}\aXpmz=0$ and $\tilde
  P_\sigma|_{\overline{X_0}}\aXzmz=0$ in Taylor series at $\pa_+
  X_0=\pa X_+$. Since the form \eqref{eq:tilde-P-sigma-form} of
  $\tilde P_\sigma$ shows that the Taylor series of $\CI$ functions in the approximate nullspace
  (modulo functions vanishing to infinite order at $\pa X_+$) of
  $\tilde P_\sigma$ is determined\footnote{As $\mu^j \CI(\tilde X)$ is
    mapped to $\mu^{j-1}\CI(\tilde X)$ for $j\geq 1$ integer by
    $\tilde P_\sigma$, with $\tilde P_\sigma(\mu^j
    b)-j(j-\imath\sigma)b\mu^{j-1}\in\mu^j\CI(\tilde X)$, the claim
    follows by induction, noting that $j(j-\imath\sigma)$ cannot
    vanish when $j\geq 1$ is an integer as $\imath\sigma$ is not an
    integer.}
by the restriction to $\pa X_+$, the
  result follows. For $\aXpmz$ and $\aXzmz$ the result follows by
  considering $\tilde P_{-\sigma}$ in place of $\tilde P_{\sigma}$.
\end{proof}

We can now define the Poisson operators:

\begin{prop}(See \cite[Section~1]{Joshi-Sa-Barreto:Inverse} for an
  explicit statement, and also \cite{Mazzeo-Melrose:Meromorphic}.)
Suppose $\imath\sigma\notin\ZZ$, and $\sigma$ is not a pole of
$\cR_{X_+}$.
Given $\bXppz\in\CI(\pa X_+)$ there is a
solution $\uXp$ of
$$
\Big(-\Delta_{X_+}+\sigma^2+\Big(\frac{n-1}{2}\Big)^2\Big)\uXp=0
$$
with $\uXp=\vXpp+\vXpm$, $\vXppm$ of the form \eqref{eq:hat-check-form}, with
$\aXppz=\bXppz$.

Further, a solution $\uXp$ of this form is unique provided
$\imath\sigma\notin\ZZ$ and $\sigma^2+\Big(\frac{n-1}{2}\Big)^2$ is
not an $L^2$-eigenvalue of $\Delta_{X_+}$.
\end{prop}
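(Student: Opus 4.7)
The plan is to prove existence by combining a formal asymptotic construction at $\pa X_+$ with a correction given by the resolvent $\cR_{X_+}(\sigma)$, and to prove uniqueness by formal rigidity together with the $L^2$ spectral hypothesis.

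For existence, I would first build $\vXpp = \aXpp\,\xXp^{(n-1)/2+\imath\sigma}$ with $\aXpp \in \CI(\overline{X_+})$ and $\aXpp|_{\pa X_+} = \bXppz$, solving the equation modulo $\dCI(\overline{X_+})$. In the even variable $\mu = \xXp^2$ the conjugated operator has the indicial structure visible in \eqref{eq:tilde-P-sigma-form}, so the $j$-th Taylor coefficient of $\aXpp$ in $\mu$ is determined from the preceding ones through a factor of the form $4j(j-\imath\sigma)$; the hypothesis $\imath\sigma \notin \ZZ$ keeps this factor invertible for every $j \geq 1$. Borel summation then yields $\aXpp$ with error $\fXpp \in \dCI(\overline{X_+})$. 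Setting $\uXp = \vXpp - \cR_{X_+}(\sigma)\fXpp$, which is legitimate because $\sigma$ is not a pole of $\cR_{X_+}$, produces a solution of the homogeneous equation, and it remains only to identify the form of the correction term. Using \eqref{eq:global-to-hyp-inv},
$$
\cR_{X_+}(\sigma)\fXpp = \xXp^{-\imath\sigma + (n-1)/2}\bigl(\tPsb\bigl(\xXp^{\imath\sigma - (n-1)/2 - 2}\fXpp\bigr)\bigr)\big|_{X_+}.
$$
Since $\fXpp \in \dCI(\overline{X_+})$, the bracketed argument is smooth and vanishes to infinite order at $\pa X_+$, so it extends by zero to an element of $\CI(\tilde X)$; its image under $\tPsb$, restricted to $X_+$, is smooth up to $\pa X_+$ and plays the role of $\aXpm \in \CI(\overline{X_+})$. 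Hence the correction has the form $\vXpm = \aXpm\,\xXp^{(n-1)/2 - \imath\sigma}$, and $\uXp = \vXpp + \vXpm$ has $\aXpp|_{\pa X_+} = \bXppz$ as required.

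For uniqueness, the difference $u = \vXpp + \vXpm$ of two such solutions satisfies the homogeneous equation with $\aXpp|_{\pa X_+} = 0$. Running the formal recursion above with vanishing leading coefficient shows that the entire Taylor series of $\aXpp$ at $\pa X_+$ vanishes, so $\vXpp = \aXpp\,\xXp^{(n-1)/2+\imath\sigma}$ lies in $L^2(X_+, dg_+)$ (the rapid vanishing of $\aXpp$ dominates the $\imath\sigma$ in the exponent). For $\sigma$ with $\im\sigma > 0$ the second piece $\vXpm$ is also in $L^2$, so $u$ is an $L^2$-eigenfunction of $\Delta_{X_+}$ with eigenvalue $\sigma^2 + ((n-1)/2)^2$; the $L^2$-eigenvalue hypothesis then forces $u = 0$. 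For general $\sigma$ the conclusion follows by meromorphic continuation: the Poisson operator built from the two steps above depends meromorphically on $\sigma$ and has no pole at regular points of $\cR_{X_+}$, so its kernel (trivial in the $L^2$ regime) remains trivial by analytic continuation.

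The main obstacle is the precise asymptotic behavior of the correction $\cR_{X_+}(\sigma)\fXpp$: one must be sure it carries exactly the conormal exponent $(n-1)/2 - \imath\sigma$ with smooth coefficient in the even structure, rather than a mixture of both indicial exponents. The identity \eqref{eq:global-to-hyp-inv} converts this into a statement about the action of $\tPsb$ on $\dCI$ data extended across $\pa X_+$, which is precisely the content that needs input from the global framework developed in the preceding sections (or, equivalently, from the resolvent kernel structure of \cite{Mazzeo-Melrose:Meromorphic}).
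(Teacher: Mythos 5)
Your existence argument follows the paper's approach closely: construct $\vXpp$ by solving the indicial recursion (each step invertible because $\imath\sigma\notin\ZZ$), then correct by $\cR_{X_+}(\sigma)\fXpp$. The extra detail you supply via \eqref{eq:global-to-hyp-inv} to verify that the correction has the form $\vXpm$ is a legitimate way to make this step precise. Your uniqueness argument in the regime $\im\sigma>0$ also matches the paper's: the difference of two solutions has its $\vXpp$-part in $\dCI(\overline{X_+})$ by the indicial recursion, its $\vXpm$-part in $L^2$ because of the sign of $\im\sigma$, hence the whole difference is an $L^2$-eigenfunction and vanishes by hypothesis.

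The gap is your treatment of general $\sigma$. The phrase ``its kernel (trivial in the $L^2$ regime) remains trivial by analytic continuation'' does not prove anything here, because uniqueness is not the statement that $\cPXp(\sigma)$ has trivial kernel; it is the statement that the nullspace of the map sending a solution $\vXpp+\vXpm$ to its leading coefficient $\aXppz$ is trivial, i.e.\ that there is no nonzero solution of the form $\vXpm$. Triviality of that nullspace is not a property that holomorphically propagates out of the half-plane $\im\sigma>0$, and for $\im\sigma\le 0$ a $\vXpm$-type solution need not be $L^2$ (the exponent $(n-1)/2+\im\sigma$ is no longer strong enough against the $\xXp^{-n}$ volume factor), so the $L^2$-eigenvalue hypothesis gives you nothing directly. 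The paper closes this gap with the boundary pairing argument of \cite{Joshi-Sa-Barreto:Inverse}, following \cite{RBMSpec}: applying Green's identity to $w$ and $\bar w$ on $\{\xXp>\epsilon\}$ and letting $\epsilon\to 0$ forces the leading coefficient $\aXpmz$ of a putative homogeneous solution $w$ to vanish; only then does one conclude $w\in\dCI(\overline{X_+})\subset L^2$ and apply the $L^2$ hypothesis. That pairing step is the missing piece of your proof and cannot be replaced by meromorphic continuation alone.
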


\begin{rem}
Note that $\aXpmz$, i.e.\ the renormalized boundary value of $\vXpm$, is not specified.
\end{rem}

\begin{Def}\label{def:Poisson-X+}
The Poisson operator $\cPXp(\sigma):\CI(\pa
X_+)\to\dist(\overline{X_+})$
is defined as the meromorphic map $\bXppz\mapsto \uXp$ for
$\imath\sigma\notin\ZZ$.

The scattering matrix on $X_+$ is the operator
$\cSXp(\sigma):\CI(\pa X_+)\to\CI(\pa X_+)$ given by
$\cSXp(\sigma):\bXppz=\aXppz\mapsto \aXpmz$ with the notation of the
proposition and \eqref{eq:hat-check-form}.
\end{Def}

\begin{rem}
We could define $\cPXp^{-}(\sigma)$ similarly, in which $\aXpmz$
is specified in place of $\aXppz$, but this is just
$\cPXp(-\sigma)$ as reversing the sign of $\sigma$ interchanges
the two functions $\vXppm$. In particular, this gives $\cSXp(\sigma)=\cPXp(-\sigma)^{-1}\cPXp(\sigma)$.
\end{rem}

\begin{proof}
While this result is stated in \cite{Joshi-Sa-Barreto:Inverse}, we
give a summary of the argument.

For existence, $\uXp$ is given by
first constructing $\vXpp$ as above from $\aXppz$, and then for
$\sigma$ not a pole of $\cR_{X_+}$,
$$
\uXp=\vXpp-\cR_{X_+}(\sigma)\fXpp,
$$
with the second term of the form $\vXpm$ indeed.

Now consider uniqueness. The difference of two such $\uXp$ is of the
form $\vXpm$ necessarily since the leading coefficient $\aXppz$
determines the full Taylor series of $\aXpp$ (taking into account the
evenness of the Taylor series in terms of $\xXp$ to separate $\vXpp$
and $\vXpm$).
If $\im\sigma> 0$ and $\sigma^2+(n-1)^2/4$ is not
an $L^2$-eigenvalue of $\Delta_{X_+}$,
uniqueness follows
since $\vXpm$ is then in $H^2_0(\overline{X_+})$ (understood relative
to the non-even, i.e.\ standard, smooth structure). In general one can
show by a pairing argument, see \cite{Joshi-Sa-Barreto:Inverse}, which
in turn follows \cite{RBMSpec} that in fact the leading coefficient
$\aXpmz$ vanishes and then in fact $\vXpm$ is in
$\dCI(\overline{X_+})$, and then one can finish the argument as above.
\end{proof}

We can analogously
define a Poisson operator for $X_0$ at $\pa_+X_0$, but here we specify both
$\aXzpmz|_{\pa_+ X_0}$:

\begin{prop}(See \cite[Theorem~5.5]{Vasy:De-Sitter}.)
Suppose $\sigma\notin\imath\ZZ$. Given $\bXzpmz\in\CI(\pa_+ X_0)$ there is a
unique solution $\uXz$ of
$$
\Big(\Box_{X_0}-\sigma^2-\Big(\frac{n-1}{2}\Big)^2\Big)\uXz=0
$$
with $\uXz=\vXzp+\vXzm$, $\vXzpm$ of the form \eqref{eq:hat-check-form}, with
$\aXzpm|_{\pa_+ X_0}=\bXzpmz$.
\end{prop}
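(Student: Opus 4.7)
The plan is to mimic the hyperbolic Poisson construction, but with both $\aXzpmz$ specified simultaneously: first build an approximate solution $\vXzp+\vXzm$ by formal asymptotic expansion at $\pa_+X_0$, then correct the remaining error via the backward solution operator $\cR_{X_0,\past}(\sigma)$ from \eqref{eq:global-to-dS-inv}, and finally establish uniqueness via the non-integer indicial difference together with the Carleman-type unique continuation estimate already invoked in the discussion of \eqref{eq:supp-map-P-1}.

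For existence, I will construct $\aXzpm\in\CI(\overline{X_0})$ by a formal Taylor series recursion at $\pa_+X_0$: writing the expansion in $\mu=\xXz^2$ (the even smooth structure), the indicial equation for $P_\sigma$ at order $j\geq 1$ contributes a factor of the form $j(j\pm\imath\sigma)$, in analogy with \eqref{eq:tilde-P-sigma-form}, which is nonzero precisely because $\imath\sigma\notin\ZZ$. Each Taylor coefficient of $\aXzpm$ is thereby uniquely determined by $\bXzpmz$, and a Borel summation yields $\aXzpm$, exactly as in \cite[Lemma~6.4]{Baskin-Vasy-Wunsch:Radiation} or in the Mazzeo--Melrose construction. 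After extending the resulting $\vXzp+\vXzm$ to all of $\overline{X_0}$ by a smooth cutoff, the error $\fXz=(\Box_{X_0}-\sigma^2-((n-1)/2)^2)(\vXzp+\vXzm)$ lies in $\dCI(\overline{X_0})$ near $\pa_+X_0$ and is compactly supported away from $\pa_+X_0$. Subtracting $\cR_{X_0,\past}(\sigma)\fXz$ produces a genuine solution; the backward solution operator propagates supports/regularity away from $\pa_+X_0$ (by the same forward-in-time propagation argument used in the discussion of \eqref{eq:supp-map-P-1}), so this correction vanishes to infinite order at $\pa_+X_0$ and can be absorbed into $\aXzp$ or $\aXzm$ without altering $\bXzpmz$.

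For uniqueness, suppose $\uXz=\vXzp+\vXzm$ satisfies the homogeneous equation with $\bXzpmz=0$. The hypothesis $\sigma\notin\imath\ZZ$ ensures that the two asymptotic series $\aXzpm\xXz^{(n-1)/2\pm\imath\sigma}$ do not overlap within the even smooth structure, since equality of terms would require $2\imath\sigma\in 2\ZZ$. Thus the indicial recursion decouples the two series and vanishing leading data force the entire Taylor series of each $\aXzpm$ at $\pa_+X_0$ to vanish, giving $\uXz\in\dCI$ near $\pa_+X_0$. A slight modification of \cite[Proposition~5.3]{Vasy:De-Sitter} for complex $\sigma$ — the same modification used above for \eqref{eq:supp-map-P-1}, where the spectral parameter is semiclassically subleading in the Carleman argument — then gives $\uXz\equiv 0$ in a neighborhood of $\pa_+X_0$; propagation of this vanishing along null-bicharacteristics, all of which connect $\pa_+X_0$ to $\pa_-X_0$, extends it to all of $X_0$.

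The main obstacle is technical rather than conceptual: ensuring that the backward-solution correction is indeed $\dCI$ at $\pa_+X_0$ (so that the form \eqref{eq:hat-check-form} is preserved), and that the two formal series can be unambiguously separated. Both points hinge on $\imath\sigma\notin\ZZ$. The novelty relative to Definition~\ref{def:Poisson-X+} on the hyperbolic side is that on $X_0$ one may freely prescribe the leading coefficients of \emph{both} asymptotic terms, reflecting the fact that the backward Cauchy problem on the asymptotically de Sitter space is uniquely solvable, in contrast with the hyperbolic case where $\aXpmz$ is generically determined by the $L^2$-normalized resolvent once $\aXppz$ has been fixed.
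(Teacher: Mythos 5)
Your proposal follows essentially the same route as the paper: build a formal approximate solution of the prescribed form at $\pa_+X_0$ (with the indicial recursion closing because $\imath\sigma\notin\ZZ$), correct the resulting $\dCI$ error by subtracting $\cR_{X_0,\past}(\sigma)\fXz$, and deduce uniqueness from the non-overlap of the two indicial series together with the Carleman-type unique continuation near $\pa_+X_0$ followed by propagation along null-bicharacteristics. The paper states this more compactly by citing \cite{Vasy:De-Sitter} for both the mapping properties of $\cR_{X_0,\past}(\sigma)$ (in particular the asymptotic form of the correction at $\pa_-X_0$, which your sketch does not address explicitly but which is covered by the same reference) and the vanishing-to-infinite-order-implies-zero result; your version fills in the indicial-root details and the decoupling argument, which is a useful elaboration but not a different method.
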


\begin{rem}
Note that there are two boundary hypersurfaces of $X_0$; we are
specifying both pieces of data $\aXzpmz$ at $\pa_+ X_0$ and neither of
them at $\pa_- X_0$.

Also, in \cite{Vasy:De-Sitter} only $\sigma^2$ real was considered,
but allowing general $\sigma\in\Cx$ causes only minimal changes to
the arguments. See also the remarks following \eqref{eq:supp-map-P-1}
in this regard.
\end{rem}

\begin{proof}
For existence, with $\vXzp,\vXzm$ as in \eqref{eq:hat-check-form} corresponding to
$\aXzpmz|_{\pa_+X_0}=\bXzpmz$ and $\aXzpmz|_{\pa_-X_0}=0$,
let
$$
\uXz=\vXzp+\vXzm-\cR_{X_0,\past}(\sigma)(\fXzp+\fXzm),
$$
with the inverse being the backward solution of the wave equation;
this has all the desired properties as shown in \cite{Vasy:De-Sitter}.
Uniqueness follows since the homogeneous PDE has no solutions which
vanish to infinite order at $\pa_+X_0$ as shown in \cite{Vasy:De-Sitter}.
\end{proof}

\begin{Def}\label{def:Poisson-X0}
The backward Poisson operator
$$
\cPXzb(\sigma):\CI(\pa_+X_0)\oplus
\CI(\pa_+X_0)\to\dist(\overline{X_0})
$$
is given by
$\cPXzb(\sigma) (\bXzpz,\bXzmz)=\uXz$ in the notation of
the proposition, while the scattering matrix
$$
\cSXtb(\sigma):\CI(\pa_+X_0)\oplus
\CI(\pa_+X_0)\to \CI(\pa_-X_0)\oplus
\CI(\pa_-X_0)
$$
is given by
$$
\cSXtb(\sigma)  (\bXzpz,\bXzmz)=(\aXzpz|_{\pa_-X_0},\aXzmz|_{\pa_-X_0}).
$$
\end{Def}

\begin{rem} Here
the index `$\past$' of $\cPXzb(\sigma)$ denotes that we are
solving the equation backwards, from $\pa_+X_0$ to $\pa_-X_0$. The
forward Poisson operator $\cPXzf(\sigma)$ is defined similarly, with
the data $(\aXzpz|_{\pa_-X_0},\aXzmz|_{\pa_-X_0})$ specified.

We also remark that replacing $\sigma$ by $-\sigma$ simply switches
the two pieces of data $\cPXzb(\sigma)$ is applied to, i.e.\ if $J$ is this
exchange operator then $\cPXzb(-\sigma)=\cPXzb(\sigma)J$. This is in
contrast to the asymptotically hyperbolic space, in which
$\cPXp(\sigma)$ and $\cPXp(-\sigma)$ are related by the much more
complicated scattering matrix $\cSXp(\sigma)$: $\cPXp(-\sigma)\cSXp(\sigma)=\cPXp(\sigma)$.
\end{rem}

Finally, we also have a Poisson
operator for the Mellin transformed global operator,
specifying both $\aXzpmz|_{\pa_+ X_0}$ again:

\begin{prop}
Suppose $\tilde P_\sigma$ is invertible as a map \eqref{eq:Pt-sigma-spaces}.
Then given $\bXtpmz\in\CI(\pa_+ X_0)$ there is a
unique solution $u$ of
$$
\tilde P_\sigma  u=0
$$
with
\begin{equation}\label{eq:Poisson-Xt-form}
\uXt=\vXtp+
\vXtm+\vXtz,
\end{equation}
with $\vXtpm$ of the form \eqref{eq:tilde-form}, with
$\aXtpmsz|_{\pa_+ X_0}=\bXtpmz$, and with $\vXtz\in\CI(\tilde X)$.
\end{prop}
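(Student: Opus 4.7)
The plan is to mimic the existence/uniqueness arguments for the Poisson operators on $X_+$ and $X_0$, carried out on the global space $\tilde X$, with one extra repackaging step at $\pa_-X_0$ needed to bring the corrective term into the advertised form.

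For \emph{existence}, I would first produce approximate solutions $\vXtpm=\aXtpm(\mu\pm\imath 0)^{\imath\sigma}$ by the formal Taylor-series construction at $\pa X_0$ already used in the text (cf.\ \cite[Lemma~6.4]{Baskin-Vasy-Wunsch:Radiation} and the discussion around \eqref{eq:tilde-form}), imposing $\aXtpm|_{\pa_+X_0}=\bXtpmz$ and $\aXtpm|_{\pa_-X_0}=0$. The hypothesis $\imath\sigma\notin\ZZ$ avoids indicial resonances, and because the construction at $\pa X_0$ is local, the second condition forces $\aXtpm$ to vanish to infinite order at $\pa_-X_0$, so that $\tilde P_\sigma\vXtpm\in\CI(\tilde X)$. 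Setting $f=-\tilde P_\sigma(\vXtp+\vXtm)\in\CI(\tilde X)$ and $w=\tPsb f$, the sum $u_0=\vXtp+\vXtm+w$ solves $\tilde P_\sigma u_0=0$ and lies in $\cX^s$. The variable-order threshold in $\cX^s$ being strictly above $\tfrac12-\im\sigma$ at $\pa_+X_0$ propagates to $w\in\CI$ near $\pa_+X_0$; at $\pa_-X_0$ the expansion \eqref{eq:expansion-at-outgoing-rad-set} gives
$$
w=\aXtmp(\mu+\imath 0)^{\imath\sigma}+\aXtmm(\mu-\imath 0)^{\imath\sigma}+\vXtmz,\qquad \aXtmpm,\vXtmz\in\CI(\tilde X).
$$
Choosing a cutoff $\chi\in\CI(\tilde X)$ supported near $\pa_-X_0$ and vanishing near $\pa_+X_0$, I then replace $\aXtpm$ by $\aXtpm+\chi\aXtmpm$; this preserves both $\aXtpm|_{\pa_+X_0}=\bXtpmz$ and the form \eqref{eq:tilde-form}, while the leftover $\vXtz:=u_0-\vXtp-\vXtm$ is smooth on all of $\tilde X$ (the singular parts of $w$ are cancelled where $\chi\equiv 1$ and are already absent near $\pa_+X_0$).

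For \emph{uniqueness}, given a solution $u=\vXtp+\vXtm+\vXtz$ of $\tilde P_\sigma u=0$ with $\aXtpmsz|_{\pa_+X_0}=0$, the indicial argument built on \eqref{eq:tilde-P-sigma-form} — as in the footnote in the proof of Lemma~\ref{lemma:smooth-match}, using that $j(j-\imath\sigma)\neq 0$ for integer $j\geq 1$ since $\imath\sigma\notin\ZZ$ — propagates the boundary vanishing through the entire Taylor series of $\aXtpm$ at $\pa_+X_0$, making $u$ smooth there. The conormal behavior at $\pa_-X_0$ is compatible with $u\in\cX^s$ for a suitable variable order as in \eqref{eq:Pt-sigma-spaces}, so the invertibility hypothesis forces $u=0$.

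The step I expect to require the most care is the repackaging of the expansion of $w$ at $\pa_-X_0$ into the same pair of distributions $(\mu\pm\imath 0)^{\imath\sigma}$ used at $\pa_+X_0$. This works cleanly only because $\mu$ is a global defining function for the entire boundary $\pa_+X_0\cup\pa_-X_0=\pa X_0$ inside $\tilde X$; a single pair of smooth coefficients $\aXtpm\in\CI(\tilde X)$ then suffices to carry the leading conormal singularities at both radial sets $N^*\pa_\pm X_0$ simultaneously.
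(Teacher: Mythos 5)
Your construction coincides with the paper's: build $\vXtpm$ by the formal Taylor-series construction with $\aXtpmsz|_{\pa_+X_0}=\bXtpmz$ and $\aXtpmsz|_{\pa_-X_0}=0$, set $u_0 = \vXtp+\vXtm - \tPsb(\fXtp+\fXtm)$, invoke \eqref{eq:expansion-at-outgoing-rad-set} for the structure of $w=-\tPsb(\fXtp+\fXtm)$ at $\pa_-X_0$, and prove uniqueness by reducing to invertibility of $\tilde P_\sigma$ on $\cX^s$. Your explicit cutoff repackaging of $w$'s conormal part at $\pa_-X_0$ into the coefficients $\aXtpm$ usefully spells out what the paper compresses into ``so $\uXt$ has the decomposition claimed,'' and the observation that this hinges on $\mu$ being a \emph{global} defining function for $\pa X_0=\pa_+X_0\cup\pa_-X_0$ is exactly the right point to emphasize.

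Two small inaccuracies, neither of which breaks the argument. First, $u_0=\vXtp+\vXtm+w$ does \emph{not} lie in $\cX^s$: the conormal terms $\vXtpm$ at $N^*\pa_+X_0$ sit in $H^t$ only for $t<\tfrac12-\im\sigma$, while the variable order $s$ is chosen strictly above that threshold there. What lies in $\cX^s$ is $w=\tPsb f$ alone, and that is all the construction needs, so the aside is harmless but should be dropped. Second, for propagating the Taylor series of the coefficients $\aXtpm$ in the conormal ansatz $\aXtpm(\mu\pm\imath 0)^{\imath\sigma}$, the indicial factor coming from \eqref{eq:tilde-P-sigma-form} is $4j(j+\imath\sigma)$, not $j(j-\imath\sigma)$ (the latter is the factor for the $\CI$ nullspace in the footnote to Lemma~\ref{lemma:smooth-match}); both are nonzero for integer $j\geq 1$ under $\imath\sigma\notin\ZZ$, so the conclusion stands, but the citation is slightly misplaced.
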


\begin{proof}
Again, we let
$\vXtpm$ be as above with $\aXtpmsz|_{\pa_+ X_0}=\bXtpmz$,
$\aXtpmsz|_{\pa_- X_0}=0$ (so $\vXtpm$ is $\CI$ at $\pa_- X_0$), and then
$$
\uXt=\vXtp+
\vXtm-\tPsb(\fXtp+ \fXtm),
$$
is the unique distributional solution of $\tilde P_\sigma u=0$ with 
$\uXt-(\vXtp+\vXtm)$ having wave front set disjoint from $N^*\pa X_+$ (which
properties would hold for any $u$ of the desired form, thus giving uniqueness).
Further, $\uXt-(\vXtp+\vXtm)$ has wave front set in $N^*\pa X_-$, and indeed its structure given by
\eqref{eq:expansion-at-outgoing-rad-set} at $\pa X_-$, so $\uXt$ has the
decomposition claimed in the proposition.
\end{proof}

\begin{Def}\label{def:Poisson-Xt}
The backward Poisson operator
$$
\cPXtb(\sigma):\CI(\pa_+X_0)\oplus
\CI(\pa_+X_0)\to\dist(\tilde X)
$$
is given by
$\cPXtb(\sigma) (\bXtpz,\bXtmz)=\uXt$ in the notation of
the proposition, while the scattering matrix
$$
\cSXtb(\sigma):\CI(\pa_+X_0)\oplus
\CI(\pa_+X_0)\to \CI(\pa_-X_0)\oplus
\CI(\pa_-X_0)
$$
is given by
$$
\cSXtb(\sigma)  (\bXtpz,\bXtmz)=(\aXtpsz|_{\pa_-X_0},\aXtmsz|_{\pa_-X_0}).
$$
\end{Def}

We now work out the relationships between these operators. Thus, let
$$
\uXt=\cPXtb(\sigma)(\bXtpz,\bXtmz).
$$
Keeping in mind that $\mu=\xXp^2$, in view of \eqref{eq:Poisson-Xt-form},
$\uXp=\xXp^{-\imath\sigma+(n-1)/2} \uXt|_{X_+}$ satisfies
$$
\Big(-\Delta_{X_+}+\sigma^2+\Big(\frac{n-1}{2}\Big)^2\Big)\uXp=0,
$$
with $\uXp=\vXpp+\vXpm$,
\begin{equation*}\begin{aligned}
&\vXppm=\aXppm \xXp^{(n-1)/2\pm \imath\sigma},\ \aXppm\in\CI(\overline{X_+}),
\end{aligned}\end{equation*}
with
$$
\aXpp|_{\pa X_+}=\bXtpz+\bXtmz
$$
since the distribution $(\mu\pm \imath 0)^s$
restricted to $\mu>0$ is just the function $\mu^s$, and with
\begin{equation}\label{eq:a-from-X+}
\aXpm|_{\pa X_+}=\vXtz|_{\pa X_+}.
\end{equation}
Correspondingly,
\begin{equation}\label{eq:hyp-Poisson-forw-gen}
\Big(\xXp^{-\imath\sigma+(n-1)/2} \cPXtb(\sigma)(\bXtpz,\bXtmz)\Big)\Big|_{X_+}=\uXp=\cPXp(\sigma)(\bXtpz+\bXtmz).
\end{equation}
As an aside, this means that
\begin{equation}\label{eq:hyp-Poisson-forw}
\cPXp(\sigma)(\bXppz)=\xXp^{-\imath\sigma+(n-1)/2} (\cPXtb(\sigma)(\bXppz,0))|_{X_+},
\end{equation}
and one could equally well use $(0,\bXppz)$ as the data for
$\cPXtb(\sigma)$.
Returning to $\uXp=\vXpp+\vXpm$, we can now identify $\aXpm|_{\pa X_+}$ in terms of the scattering matrix on $X_+$:
\begin{equation}\label{eq:a-from-X+-S}
\aXpm|_{\pa X_+}=\cSXp(\sigma)(\aXpp|_{\pa X_+})=
\cSXp(\sigma) (\bXtpz+\bXtmz).
\end{equation}
Thus, switching to the de Sitter side,
with
$\uXt=\cPXtb(\sigma)(\bXtpz,\bXtmz)$ still, with
$\mu=-\xXz^2$ now,
$\uXz=\xXz^{-\imath\sigma+(n-1)/2} \uXt|_{X_0}$ satisfies
$$
\Big(\Box_{X_0}-\sigma^2-\Big(\frac{n-1}{2}\Big)^2\Big)\uXz=0,
$$
with $\uXz=\vXzp+\vXzm$,
\begin{equation*}\begin{aligned}
&\vXzpm=\aXzpm \xXz^{(n-1)/2\pm \imath\sigma},\ \aXzpm\in\CI(\overline{X_0}),
\end{aligned}\end{equation*}
with
$$
\aXzp|_{\pa
  X_+}=e^{-\pi\sigma}\bXtpz+e^{\pi\sigma}\bXtmz
$$
since the distribution $(\mu\pm \imath 0)^s$
restricted to $\mu<0$ is just the function $e^{\pm \imath\pi
  s}|\mu|^s=e^{\pm \imath\pi s}\xXz^{2s}$, and with
$$
\aXzm|_{\pa X_+}=\vXtz|_{\pa_+X_0}=\cSXp(\sigma)(\bXtpz+\bXtmz)
$$
in view of \eqref{eq:Poisson-Xt-form} for the first equality and
\eqref{eq:a-from-X+} and \eqref{eq:a-from-X+-S} for the second.
Correspondingly,
\begin{equation*}\begin{aligned}
&\xXz^{-\imath\sigma+(n-1)/2}\cPXtb(\sigma)(\bXtpz,\bXtmz)|_{X_0}\\
&\qquad=
\uXz=
\cPXzb(\sigma)(e^{-\pi\sigma}\bXtpz+e^{\pi\sigma}\bXtmz,\cSXp(\sigma)(\bXtpz+\bXtmz)).
\end{aligned}\end{equation*}
Thus,
\begin{equation}\label{eq:dS-Poisson}
\cPXzb(\sigma) (\bXzpz,\bXzmz)=\xXz^{-\imath\sigma+(n-1)/2}\cPXtb(\sigma)(\bXtpz,\bXtmz)|_{X_0}
\end{equation}
with
\begin{equation}\label{eq:dS-b-to-a}
\begin{bmatrix}
  \bXtpz\\ \bXtmz\end{bmatrix}=\begin{bmatrix}e^{-\pi\sigma}&e^{\pi\sigma}\\
1&1\end{bmatrix}^{-1}\begin{bmatrix}\bXzpz\\  \cSXp(\sigma)^{-1}\bXzmz\end{bmatrix},
\end{equation}
assuming $\cSXp(\sigma)$ is invertible and $\sigma\notin\imath\ZZ$
so that the matrix itself is invertible.

Finally we can turn to $X_-$. As
recalled above, near $\pa_-X_0=\pa X_-$,
\begin{equation*}\begin{aligned}
&\uXt=\vXtmp+\vXtmm+\vXtz,\\
&\vXtmpm=\aXtmpm (\mu_-\pm \imath 0)^{\imath\sigma},\qquad
\aXtmpm|_{\pa X_-}=\aXtmpmsz,\\
&\aXtmpm,\vXtmpm\in\CI(X_0\cup\overline{X_-}).
\end{aligned}\end{equation*}
Thus, $\uXz=\xXz^{-\imath\sigma+(n-1)/2} \uXt|_{X_0}$ has
asymptotic expansion at $\pa X_-$ given by
\begin{equation*}\begin{aligned}
&\uXz=\vXzmp+\vXzmm,\\
& \vXzmpm=(\xXzm)^{(n-1)/2\pm\imath\sigma}\aXzmpm,\qquad \aXzmpm\in\CI(\overline{X_0}),
\end{aligned}\end{equation*}
and
\begin{equation}\label{eq:X_0-minus-asymp-rel}
\aXzmp|_{\pa X_-}=e^{-\pi \sigma}
\aXtmp|_{\pa_- X_0}+e^{\pi\sigma}\aXtmm|_{\pa_- X_0},\ \aXzmm|_{\pa_- X_0}=\vXtz|_{\pa_- X_0}.
\end{equation}
Correspondingly,
\begin{equation}\label{eq:dS-sc-data}
\cSXzb(\sigma)\begin{bmatrix} \bXzpz \\ \bXzmz\end{bmatrix}
=\begin{bmatrix}e^{-\pi \sigma}
\aXtmp|_{\pa_- X_0}+e^{\pi\sigma}\aXtmm|_{\pa_- X_0}\\ \vXtz|_{\pa_- X_0}\end{bmatrix},
\end{equation}
with $\aXtpmz$ and $\bXzpmz$ related as in \eqref{eq:dS-Poisson}-\eqref{eq:dS-b-to-a}.

Now, in $X_-$ the resolvent is in the dual regime relative to that of
the $X_+$ problem (cf.\ the appearance of $-\sigma$ vs.\ $\sigma$ in
the argument of the resolvents in Proposition~\ref{prop:global-to-pieces}), namely
$\uXm=(\xXm)^{-\imath\sigma+(n-1)/2} \uXt|_{X_-}$ solves
$$
\Big(-\Delta_{X_-}+\sigma^2+\Big(\frac{n-1}{2}\Big)^2\Big)\uXm=0,
$$
with asymptotics
\begin{equation*}\begin{aligned}
&\uXm=\vXmp+\vXmm,\\
&\vXmpm=(\xXm)^{(n-1)/2\pm\imath\sigma}\aXmpm,\qquad \aXmpm\in\CI(\overline{X_-}),
\end{aligned}\end{equation*}
and
$$
\aXmp|_{\pa X_-}=\aXtmp|_{\pa_- X_0}+\aXtmm|_{\pa_- X_0},\qquad \aXmm|_{\pa_- X_0}=\vXtz|_{\pa_- X_0}.
$$
Thus, much as in the case of the resolvent considered first above,
except using $\cPXm(-\sigma)$, so the coefficient of
$\xXm^{(n-1)/2-\imath\sigma}$, namely $v^{-}_0|_{\pa X_-}$, is the input,
\begin{equation}\begin{aligned}\label{eq:hyp-Poisson-back}
\cPXm(-\sigma)(v^{-}_0|_{\pa X_-})&=\uXm=(\xXm)^{-\imath\sigma+(n-1)/2} \uXt|_{X_-}\\
&=(\xXm)^{-\imath\sigma+(n-1)/2} \cPXtb(\sigma)(\aXtpsz,\aXtmsz)|_{X_-},
\end{aligned}\end{equation}
and
$$
\cSXm(-\sigma)\vXzm|_{\pa X_-}=\aXtmp|_{\pa_- X_0}+\aXtmm|_{\pa_- X_0}.
$$
Thus, using \eqref{eq:X_0-minus-asymp-rel},
$$
\begin{bmatrix}\Id&0\\0&\cSXm(-\sigma)\end{bmatrix}
\cSXzb(\sigma)\begin{bmatrix}
    \bXzpz\\ \bXzmz\end{bmatrix}=\begin{bmatrix}
e^{-\pi
  \sigma}&e^{\pi\sigma}\\1&1\end{bmatrix}\begin{bmatrix}\aXtmp|_{\pa_-
  X_0}\\ \aXtmm|_{\pa_- X_0}\end{bmatrix}.
$$
Combining this with \eqref{eq:dS-b-to-a} we have shown

\begin{thm}\label{thm:S-matrix}
For $\sigma\notin\imath\ZZ$, if $\sigma$ is not a pole of $\tPsb$
then the global Poisson operator $\cPXtb(\sigma)$ on $\tilde X$ determines
those of $X_\pm$ and $X_0$, $\cPXp(\sigma)$,
$\cPXm(-\sigma)$
and $\cPXzb(\sigma)$, and conversely, $\cPXp(\sigma)$,
$\cPXm(-\sigma)$
and $\cPXzb(\sigma)$
determine the
global Poisson operator $\cPXtb(\sigma)$.

Furthermore, for $\sigma$ as above,
\begin{equation*}\begin{aligned}
&\cSXtb(\sigma)(\bXtpz,\bXtmz)=\begin{bmatrix}\aXtmp|_{\pa_- X_0}\\ \aXtmm|_{\pa_-
   X_0}\end{bmatrix}\\
&\quad=\begin{bmatrix}
e^{-\pi \sigma}&e^{\pi\sigma}\\1&1\end{bmatrix}^{-1}
\begin{bmatrix}\Id&0\\0&\cSXm(-\sigma)\end{bmatrix}
\cSXzb(\sigma)
\begin{bmatrix}\Id&0\\0&\cSXp(\sigma)\end{bmatrix}\begin{bmatrix}e^{-\pi\sigma}&e^{\pi\sigma}\\
1&1\end{bmatrix}\begin{bmatrix}
 \bXtpz\\ \bXtmz\end{bmatrix},
\end{aligned}\end{equation*}
i.e.\ $\cSXtb(\sigma)$ is essentially the product of
$\cSXpm(\pm\sigma)$ and $\cSXzb(\sigma)$.
\end{thm}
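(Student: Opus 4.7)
The plan is to track a single solution $\uXt=\cPXtb(\sigma)(\bXtpz,\bXtmz)$ through the three restrict-and-renormalize procedures $\uXp=\xXp^{-\imath\sigma+(n-1)/2}\uXt|_{X_+}$, $\uXz=\xXz^{-\imath\sigma+(n-1)/2}\uXt|_{X_0}$, $\uXm=\xXm^{-\imath\sigma+(n-1)/2}\uXt|_{X_-}$, reading off at each stage the two leading asymptotic coefficients and using the scattering matrices of the constituents to couple them. All the essential machinery has already been set up in the preceding pages: the expansion \eqref{eq:Poisson-Xt-form} at $\pa_+X_0$, the expansion \eqref{eq:expansion-at-outgoing-rad-set} at $\pa_-X_0$, and the compatibility relations \eqref{eq:a-from-X+}, \eqref{eq:a-from-X+-S}, \eqref{eq:X_0-minus-asymp-rel}. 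The computation is therefore a bookkeeping exercise built around the single analytic input $(\mu\pm\imath 0)^{\imath\sigma}|_{\mu>0}=\mu^{\imath\sigma}$ and $(\mu\pm\imath 0)^{\imath\sigma}|_{\mu<0}=e^{\mp\pi\sigma}|\mu|^{\imath\sigma}$.

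On the $X_+$ side ($\mu=\xXp^2>0$), both branches $(\mu\pm\imath 0)^{\imath\sigma}$ restrict to $\xXp^{2\imath\sigma}$, so $\aXpp|_{\pa X_+}=\bXtpz+\bXtmz$; by definition of $\cSXp(\sigma)$ (and since the subleading coefficient of $\uXp$ is just $\vXtz|_{\pa X_+}$ in view of \eqref{eq:a-from-X+}), one obtains $\vXtz|_{\pa_+X_0}=\cSXp(\sigma)(\bXtpz+\bXtmz)$. Passing to $X_0$ with $\mu=-\xXz^2<0$, the same two branches now pick up the phases $e^{\mp\pi\sigma}$, hence the Poisson data for $\cPXzb(\sigma)$ at $\pa_+X_0$ is
\begin{equation*}
\begin{bmatrix}\bXzpz\\ \bXzmz\end{bmatrix}
=\begin{bmatrix}\Id&0\\0&\cSXp(\sigma)\end{bmatrix}
\begin{bmatrix}e^{-\pi\sigma}&e^{\pi\sigma}\\ 1&1\end{bmatrix}
\begin{bmatrix}\bXtpz\\ \bXtmz\end{bmatrix}.
\end{equation*}

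Applying $\cSXzb(\sigma)$ produces the pair $(\aXzmp|_{\pa X_-},\aXzmm|_{\pa X_-})$ which, by \eqref{eq:X_0-minus-asymp-rel}, equals $\begin{bmatrix}e^{-\pi\sigma}&e^{\pi\sigma}\\1&1\end{bmatrix}\begin{bmatrix}\aXtmp|_{\pa_-X_0}\\\aXtmm|_{\pa_-X_0}\end{bmatrix}$ in the first slot and $\vXtz|_{\pa_-X_0}$ in the second. To identify the second slot I then argue on $X_-$: since $\mu_-=\xXm^2>0$ the branches again coalesce, giving $\aXmp|_{\pa X_-}=\aXtmp|_{\pa_-X_0}+\aXtmm|_{\pa_-X_0}$ and $\aXmm|_{\pa X_-}=\vXtz|_{\pa_-X_0}$; by Definition~\ref{def:Poisson-X+} applied to $X_-$ with parameter $-\sigma$ (the roles of the two boundary exponents are swapped), $\cSXm(-\sigma)(\aXmp|_{\pa X_-})=\vXtz|_{\pa_-X_0}$. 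Combining, $\cSXzb(\sigma)$ acting on the $\pa_+X_0$ data equals $\begin{bmatrix}\Id&0\\0&\cSXm(-\sigma)\end{bmatrix}\begin{bmatrix}e^{-\pi\sigma}&e^{\pi\sigma}\\1&1\end{bmatrix}\cSXtb(\sigma)(\bXtpz,\bXtmz)$, and solving for $\cSXtb(\sigma)$ gives the stated formula.

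The only step that is genuinely delicate is the identification $\aXmm|_{\pa X_-}=\cSXm(-\sigma)(\aXmp|_{\pa X_-})$: it requires that $\uXm$ really is the $X_-$-Poisson solution determined by its $\xXm^{(n-1)/2-\imath\sigma}$ coefficient, which in turn needs that the smooth-error contributions from $\vXtz$ enter only through boundary restrictions and not through the scattering data — this is precisely what the smooth/residual distinction enforced between $\tilde X$ and the pieces $\Xd$ (and recorded by Lemma~\ref{lemma:smooth-match}) gives us, together with the hypothesis $\sigma\notin\imath\ZZ$ used to separate the two branches $(\mu\pm\imath 0)^{\imath\sigma}$. Granting this, the determination statement in the first part of the theorem is immediate: knowing $(\bXtpz,\bXtmz)$ determines all data fed to $\cPXp(\sigma)$, $\cPXm(-\sigma)$, $\cPXzb(\sigma)$ via the above formulas, and conversely the three constituent Poisson solutions glue to reconstruct $\uXt$ on each piece, uniquely extended across the boundaries using the fixed relation between $\xXpm^{(n-1)/2\pm\imath\sigma}$ and $(\pm\mu\pm\imath 0)^{\imath\sigma}$.
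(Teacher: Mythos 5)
Your overall strategy is exactly the paper's: track $\uXt=\cPXtb(\sigma)(\bXtpz,\bXtmz)$ through the three restrict-and-renormalize maps, read off the two leading coefficients on each piece via the identities $(\mu\pm\imath 0)^{\imath\sigma}|_{\mu>0}=\mu^{\imath\sigma}$ and $(\mu\pm\imath 0)^{\imath\sigma}|_{\mu<0}=e^{\mp\pi\sigma}|\mu|^{\imath\sigma}$, and couple the boundary data using the constituent scattering matrices. The $X_+$ and $X_0$ bookkeeping, and the resulting relation
$\begin{bmatrix}\bXzpz\\ \bXzmz\end{bmatrix}=\begin{bmatrix}\Id&0\\0&\cSXp(\sigma)\end{bmatrix}\begin{bmatrix}e^{-\pi\sigma}&e^{\pi\sigma}\\ 1&1\end{bmatrix}\begin{bmatrix}\bXtpz\\ \bXtmz\end{bmatrix}$,
match \eqref{eq:dS-b-to-a} and are correct.

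There is, however, a genuine error in the $X_-$ step: you assert $\cSXm(-\sigma)(\aXmp|_{\pa X_-})=\vXtz|_{\pa_-X_0}$, i.e.\ that $\cSXm(-\sigma)$ maps the $\xXm^{(n-1)/2+\imath\sigma}$ coefficient to the $\xXm^{(n-1)/2-\imath\sigma}$ coefficient. By Definition~\ref{def:Poisson-X+} with $\sigma$ replaced by $-\sigma$, the \emph{input} exponent is $(n-1)/2+\imath(-\sigma)=(n-1)/2-\imath\sigma$, so $\cSXm(-\sigma)$ maps $\aXmm|_{\pa X_-}=\vXtz|_{\pa_-X_0}$ to $\aXmp|_{\pa X_-}=\aXtmp|_{\pa_-X_0}+\aXtmm|_{\pa_-X_0}$ -- the opposite direction. (Indeed, from $\cSXp(\sigma)=\cPXp(-\sigma)^{-1}\cPXp(\sigma)$ one gets $\cSXp(-\sigma)=\cSXp(\sigma)^{-1}$, not $\cSXp(\sigma)$, so swapping sign of $\sigma$ genuinely reverses the direction.) Your combining step is internally consistent with your reversed direction, but if you carry it through and solve for $\cSXtb(\sigma)$ you actually obtain $\cSXm(-\sigma)^{-1}=\cSXm(\sigma)$ in place of $\cSXm(-\sigma)$, which is not the theorem's formula. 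The correct chain, as in the paper, is
\begin{equation*}
\begin{bmatrix}\Id&0\\0&\cSXm(-\sigma)\end{bmatrix}\cSXzb(\sigma)\begin{bmatrix}\bXzpz\\ \bXzmz\end{bmatrix}=\begin{bmatrix}e^{-\pi\sigma}&e^{\pi\sigma}\\ 1&1\end{bmatrix}\begin{bmatrix}\aXtmp|_{\pa_-X_0}\\ \aXtmm|_{\pa_-X_0}\end{bmatrix},
\end{equation*}
with $\cSXm(-\sigma)$ applied on the left to the de Sitter output, not on the right. Once the direction of $\cSXm(-\sigma)$ is fixed, the rest of your argument, including the brief sketch of the determination (gluing) statement, goes through and coincides with the paper's proof.
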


\begin{proof}
First
\eqref{eq:hyp-Poisson-forw} shows that the global Poisson operator $\cPXtb(\sigma)$ 
determines $\cPXp(\sigma)$, and in particular $\cSXp(\sigma)$.
Next, \eqref{eq:dS-Poisson} that $\cPXtb(\sigma)$ determines
$\cPXzb(\sigma)$, and in particular $\cSXzb(\sigma)$.
Finally, \eqref{eq:hyp-Poisson-back} combined with
\eqref{eq:dS-sc-data}
show that $\cPXtb(\sigma)$ determines $\cPXm(\sigma)$.

For the converse, \eqref{eq:hyp-Poisson-forw-gen} shows that
$\cPXp(\sigma)$ determines the restriction of $\cPXtb(\sigma)$ to
$X_+$, and in particular the Cauchy data at future infinity,
$(\bXzpz,\bXzmz)$, for the de Sitter problem. Then
\eqref{eq:dS-Poisson} shows that the restriction of $\cPXtb(\sigma)$ to
$X_0$ is determined, and in particular the data for
$\cPXm(-\sigma)$ is determined. Then \eqref{eq:hyp-Poisson-back}
shows that the restriction of $\cPXtb(\sigma)$ to $X_-$ is
determined. Since $\imath\sigma$ is not a negative integer, the form
\eqref{eq:expansion-at-outgoing-rad-set} shows that these restrictions
determine $\cPXtb(\sigma)$ since there cannot be solutions of the
homogeneous equation supported at $\pa X_{\pm}$.
\end{proof}

Now, $\cSXpm(\sigma)$ are elliptic pseudodifferential
operators of (complex)
order $-2\imath\sigma$,
as shown by Joshi and S\'a Barreto\footnote{Note that Joshi and
S\'a Barreto use the spectral parameter $-\zeta(n-1-\zeta)$, with our
notation for the dimension of $X$, with $\re\zeta>(n-1)/2$ being the
physical half plane, corresponding to our $\sigma^2+(n-1)^2/4$ with
$\im\sigma>0$ being the physical half plane, so
$\sigma=\imath(\zeta-(n-1)/2)$ is the conversion between the two parameterizations.},
 \cite{Joshi-Sa-Barreto:Inverse},
so $\cSXm(-\sigma)$ has order $2\imath\sigma$. In particular, if
$\Delta_{\pa X_\pm}$ is the Laplacian of a metric on $\pa X_\pm$, say
of (a representative of the conformal class of) the conformal metric
$h$, then
\begin{equation}\label{eq:renorm-ah-sc}
(\Delta'_{\pa X_+})^{\imath\sigma}\cSXp(\sigma),\
\cSXm(-\sigma)(\Delta'_{\pa X_-})^{-\imath\sigma}
\end{equation}
are pseudodifferential operators of order $0$, where $\Delta'_{\pa
  X_+}$ is the operator that is $\Delta_{\pa
  X_+}$ on the orthocomplement of the nullspace of $\Delta_{\pa
  X_+}$ and the identity on the
  nullspace.\footnote{Other second order positive elliptic operators,
    bounded below by a positive constant,  would do equally
  well; with the choice of $\Delta'_{\pa X_+}$, the principal symbol of the $0$th order
  operators in \eqref{eq:renorm-ah-sc} is a constant $c_\sigma$,
  resp.\ $c_{-\sigma}$, dependent on
  $\sigma$ only via powers of $2$ and the $\Gamma$-function, see
  \cite[Theorem~1.1]{Joshi-Sa-Barreto:Inverse} and with
  $c_{\sigma}c_{-\sigma}=1$. \label{footnote:scat-norm}
}
Further, $\cSXzb(\sigma)$ is an elliptic Fourier
integral operator associated to the backward null-geodesic flow from
$\pa_+X_0$ to $\pa_-X_0$ as
shown by the author\footnote{Note that in \cite{Vasy:De-Sitter} the
  two summands are interchanged: the $x^{s_+(\lambda)}\wXzp$ term is
  put first, $x^{s_-(\lambda)}\wXzm$ is put second,
  $\wXzp,\wXzm\in\CI(\overline{X_0})$, which is the
  reverse of Definition~\ref{def:Poisson-X0} in view of
  \eqref{eq:sigma-to-s}. Further, the assumption in
  \cite{Vasy:De-Sitter} in the stated version of Theorem~1.2 is that
  $2\imath\sigma$ is not an integer, but as is explained below the
  statement of this theorem, if the metric is even, as in our case, $\imath\sigma$ not
an integer suffices.} in \cite{Vasy:De-Sitter}, with the property that
\begin{equation*}\begin{aligned}
\big((\Delta'_{\pa_-X_0})^{-s_-(\lambda)/2+n/4}\oplus&(\Delta'_{\pa_-X_0})^{-s_+(\lambda)/2+n/4}\big)\cSXzb(\sigma)\\
&\big((\Delta'_{\pa_+
  X_0})^{s_-(\lambda)/2-n/4}\oplus(\Delta'_{\pa_+ X_0})^{s_+(\lambda)/2-n/4}\big)
\end{aligned}\end{equation*}
is a Fourier integral operator of order $0$, where the spectral
parameter is $\lambda=\sigma^2+(n-1)^2/4$, and
$$
s_\pm(\lambda)=\frac{n-1}{2}\pm\sqrt{(n-1)^2/4-\lambda},
$$
with the
square root being the standard one in $\Cx\setminus(-\infty,0]$, which
means that
\begin{equation}\label{eq:sigma-to-s}
s_\pm(\lambda)=\frac{n-1}{2}\mp\imath\sigma,
\end{equation}
$\im\sigma>0$ being the physical half plane. Composing with
$$
(\Delta'_{\pa_-X_0})^{s_-(\lambda)/2-n/4}\oplus
(\Delta'_{\pa_-X_0})^{s_-(\lambda)/2-n/4}
$$
from the left and
$$
(\Delta'_{\pa_+X_0})^{-s_-(\lambda)/2+n/4}\oplus
(\Delta'_{\pa_+X_0})^{-s_-(\lambda)/2+n/4}
$$
from the right,
one still has an order $0$ Fourier integral operator, i.e.
\begin{equation}\label{eq:renorm-zero-sc}
\big(\Id\oplus(\Delta'_{\pa_-X_0})^{s_-(\lambda)/2-s_+(\lambda)/2}\big)\cSXzb(\sigma)\big(\Id\oplus(\Delta'_{\pa_+ X_0})^{s_+(\lambda)/2-s_-(\lambda)/2}\big)
\end{equation}
is $0$th order. Noting that
$(s_+(\lambda)-s_-(\lambda))/2=-\imath\sigma$,
\begin{equation*}\begin{aligned}
&\begin{bmatrix}\Id&0\\0&\cSXm(-\sigma)\end{bmatrix}
\cSXzb(\sigma)
\begin{bmatrix}\Id&0\\0&\cSXp(\sigma)\end{bmatrix}\\
&\qquad=\begin{bmatrix}\Id&0\\0&\cSXm(-\sigma)(\Delta'_{\pa
    X_-})^{-\imath\sigma}\end{bmatrix}
\Big(\big(\Id\oplus(\Delta'_{\pa_-X_0})^{\imath\sigma}\big)\cSXzb(\sigma)\big(\Id\oplus(\Delta'_{\pa_+ X_0})^{-\imath\sigma}\big)\Big)\\
&\hspace{3in}\begin{bmatrix}\Id&0\\0&(\Delta'_{\pa
    X_+})^{\imath\sigma}\cSXp(\sigma)\end{bmatrix},
\end{aligned}\end{equation*}
it
follows immediately that $\cSXtb(\sigma)$ is a Fourier integral
operator associated to the same flow, with principal symbol the same
as that of $\cSXzb(\sigma)$ in view of Footnote~\ref{footnote:scat-norm}.

\begin{cor}\label{cor:FIO}
For $\sigma\in\Cx$ with $\imath\sigma\notin\ZZ$, and $\sigma$ not a
pole of $\tPsb$, $\cSXtb(\sigma)$ is an elliptic $0$th order
Fourier integral operator associated with the null-geodesic flow from
$\pa_+X_0$ to $\pa_-X_0$ on $X_0$, with principal symbol the same as
that of the renormalized backwards scattering operator on $X_0$ as in
\eqref{eq:renorm-zero-sc} conjugated by the matrix
$$
\begin{bmatrix}e^{-\pi\sigma}&e^{\pi\sigma}\\
1&1\end{bmatrix}
$$
as in Theorem~\ref{thm:S-matrix}.
\end{cor}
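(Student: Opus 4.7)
The plan is to combine the explicit factorization of $\cSXtb(\sigma)$ established in Theorem~\ref{thm:S-matrix}, namely
\[
\cSXtb(\sigma)=C(\sigma)^{-1}\begin{bmatrix}\Id&0\\0&\cSXm(-\sigma)\end{bmatrix}\cSXzb(\sigma)\begin{bmatrix}\Id&0\\0&\cSXp(\sigma)\end{bmatrix}C(\sigma),\qquad C(\sigma)=\begin{bmatrix}e^{-\pi\sigma}&e^{\pi\sigma}\\1&1\end{bmatrix},
\]
with the known microlocal characterizations of the three constituent scattering matrices, and then to check by composition that the right-hand side is an elliptic $0$th order FIO associated with the null-geodesic flow from $\pa_+X_0$ to $\pa_-X_0$, with the claimed principal symbol.

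First I would invoke the Joshi--S\'a Barreto theorem that $\cSXp(\sigma)$ and $\cSXm(-\sigma)$ are elliptic pseudodifferential operators of complex orders $-2\imath\sigma$ and $2\imath\sigma$ respectively, so that $(\Delta'_{\pa X_+})^{\imath\sigma}\cSXp(\sigma)$ and $\cSXm(-\sigma)(\Delta'_{\pa X_-})^{-\imath\sigma}$ are elliptic order-$0$ pseudodifferential operators, with scalar principal symbols $c_\sigma,\,c_{-\sigma}$ satisfying $c_\sigma c_{-\sigma}=1$ by Footnote~\ref{footnote:scat-norm}. Simultaneously, by \cite{Vasy:De-Sitter}, the operator $\cSXzb(\sigma)$ is an elliptic FIO associated with the backward null-geodesic flow from $\pa_+X_0$ to $\pa_-X_0$, and the conjugation in \eqref{eq:renorm-zero-sc} exhibits it as an elliptic FIO of order $0$ associated to the same canonical relation. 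The key manipulation is to insert canceling pairs of complex powers $(\Delta'_{\pa_\pm X_0})^{\pm\imath\sigma}$ between the diagonal pseudodifferential blocks and the $\cSXzb(\sigma)$ block so that the central factor becomes precisely the renormalized order-$0$ FIO of \eqref{eq:renorm-zero-sc} while each outer block becomes order-$0$ elliptic pseudodifferential.

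This presents $\cSXtb(\sigma)$ as a composition of elliptic order-$0$ pseudodifferential operators with a single elliptic order-$0$ FIO, flanked by the numerical matrices $C(\sigma)^{\pm 1}$. By the standard transformation properties of FIOs under composition with elliptic pseudodifferential operators, this composition is an elliptic order-$0$ FIO associated to the same canonical relation, namely the null-geodesic flow from $\pa_+X_0$ to $\pa_-X_0$. The main bookkeeping obstacle is the identification of the principal symbol. For this I would use that the two renormalized Joshi--S\'a Barreto factors have scalar principal symbols with $c_\sigma c_{-\sigma}=1$, so they contribute only trivially to the symbol of the composition. What survives is precisely the principal symbol of the renormalized $\cSXzb(\sigma)$ conjugated by $C(\sigma)$, which is the statement of the corollary. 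Ellipticity is then automatic, since each factor in the displayed product is elliptic in its own class.
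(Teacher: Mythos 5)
Your proposal matches the paper's proof in both strategy and substance: starting from the factorization in Theorem~\ref{thm:S-matrix}, invoking the Joshi--S\'a Barreto pseudodifferential characterization of $\cSXpm$ and the FIO characterization of $\cSXzb(\sigma)$ from \cite{Vasy:De-Sitter}, inserting the complex powers $(\Delta')^{\pm\imath\sigma}$ to renormalize each factor to order zero, and reading off the principal symbol using $c_\sigma c_{-\sigma}=1$ from Footnote~\ref{footnote:scat-norm}. This is precisely the argument given in the text preceding the corollary.
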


We can now put together the {\em local} relationship between the resolvents of the
problems on $X_0$ and $X_\pm$ on the one hand, and
on $\tilde X$ on the other, namely the ingredients
\eqref{eq:global-to-hyp-inv}, \eqref{eq:global-to-dS-inv} and
\eqref{eq:global-to-X_--inv} of
Proposition~\ref{prop:global-to-pieces}, together with the global
understanding of the Poisson operators to show that not only does
$\tPsb$ determine the local inverses, but the converse
also holds. We remark that this has been partially explored in
\cite[Section~7]{Baskin-Vasy-Wunsch:Radiation}, in which the diagonal
elements of the matrix described in Theorem~\ref{thm:parts-to-whole}
were obtained, following \cite{Vasy-Dyatlov:Microlocal-Kerr}, in a
somewhat weaker sense (in terms of support properties of $f$ to which
$\tPsb$ is being applied).

Thus, given $f\in\CI(\tilde X)$, we first define a
distribution $\uXt$ (which in fact will be $\CI$ away from $\pa X_-$)
by defining its restrictions $\uXtXp$, $\uXtXz$, resp.\ $\uXtXm$
to $X_+$, $X_0$ resp.\ $X_-$, checking that $\uXtXp$ and $\uXtXz$ extend
smoothly to $\pa X_+$, hence $\uXt$ can defined to be smooth across $\pa
X_+$, and then analyzing the precise singularity of $\uXtXz$ and $\uXtXm$ at
$\pa X_-$ and using this to actually define a distribution near $\pa
X_-$ as well.

So first let
\begin{equation}\label{eq:hyp-to-global-inv}
\uXtXp=\xXp^{\imath\sigma-(n-1)/2}\cR_{X_+}(\sigma)\xXp^{-\imath\sigma+(n-1)/2
  +2}f|_{X_+}.
\end{equation}
Then $\uXtXp\in \CI(\overline{X_+})$ (in the even sense!) by the
mapping properties of the resolvent on $X_+$; let $\vXtXpmz=\uXtXp|_{\pa
  X_+}$. Next, we define $\uXtXz\in\CI(X_0)$ by
\begin{equation}\begin{aligned}\label{eq:dS-to-global-inv}
\uXtXz=&\xXz^{\imath\sigma-(n-1)/2}\cPXzb(\sigma)(0,\vXtXpmz)\\
&\qquad+
\xXz^{\imath\sigma-(n-1)/2}\cR_{X_0,\past}(\sigma)
\xXz^{-\imath\sigma+(n-1)/2 +2}f|_{X_0}.
\end{aligned}\end{equation}
Then $\uXtXz$ is $\CI$ up to $\pa_+ X_0$, and it
has an asymptotic expansion at $\pa_- X_0$ of the form
$$
\uXtXz=\vXtXzp+\vXtXzm,\ \text{with}\ \vXtXzp=(\xXzm)^{2\imath\sigma}\aXtXzp,\ \vXtXzm=\aXtXzm,
$$
with $\aXtXzpm$ being $\CI$ up to $\pa_- X_0$.
Here, $\uXtXp$ and $\uXtXz$ not only have the same restriction at $\pa
X_+$ (which is automatic by the definition of the Poisson operator),
but have matching Taylor series (in terms of the `even' smooth structure,
i.e.\ that of $\tilde X$) by Lemma~\ref{lemma:smooth-match}.
We next let
\begin{equation}\label{eq:X_--to-global-inv}
\uXtXm=\xXm^{\imath\sigma-(n-1)/2}\cPXm(-\sigma) \aXtXzm|_{\pa_- X_0}+\xXm^{\imath\sigma-(n-1)/2} \cR_{X_-}(-\sigma) \xXm^{-\imath\sigma+(n-1)/2 +2} f|_{X_-}.
\end{equation}
Then $\uXtXm$ has an asymptotic expansion at $\pa X_-$ of the form
$$
\vXtXmp+\vXtXmm,\ \vXtXmp=
\xXm^{2\imath\sigma}\aXtXmp,\ \vXtXmm=\aXtXmm,
$$
and $\aXtXmpm$ are $\CI$ up to $\pa X_-=\pa_-X_0$. Further, again, $\aXtXmm$ and $\aXtXzm$ not only have the same restriction at $\pa
X_-$ (which is automatic by the definition of the Poisson operator),
but have matching Taylor series by Lemma~\ref{lemma:smooth-match}.
Now notice that for $\sigma\notin\imath\ZZ$
there is a unique distribution defined near $\pa X_-$,
of the form
\begin{equation}\label{eq:pmi0-comb}
\aXtmp(\mu+\imath 0)^{\imath\sigma}+\aXtmm(\mu-\imath
0)^{\imath\sigma},
\end{equation}
$\aXtmpm$ being $\CI$ near $\pa X_-$,
whose restriction to $X_0$, resp.\ $X_-$ is $\vXtXzp$, resp.\
$\vXtXmp$. Indeed, the difference of any two such distributions
would be a differentiated delta distribution supported on $\pa X_-$,
which are never of this form if $\sigma\notin\imath\ZZ$, showing
uniqueness, while expanding $\aXtXzp$, $\aXtXmp$ and the
putative $\aXtpm$ in
Taylor series around $\pa X_-$, one is reduced to observing that one
must have for the $j$th term in the ($\mu$-based, i.e.\ even in terms
of $\xXd$) Taylor series
$$
\begin{bmatrix}\aXtXzmj\\ \aXtXmmj\end{bmatrix}=\begin{bmatrix}e^{-\pi(\sigma-\imath
    j)}&e^{\pi(\sigma-\imath
    j)}\\1&1\end{bmatrix}\begin{bmatrix}\aXtmpj\\ \aXtmmj\end{bmatrix},
$$
and in case $\sigma\notin\imath\ZZ$, the matrix on the right hand side
is invertible. Thus, there is a unique distribution $\uXt$ on $\tilde X$ which
is $\CI$ away from $\pa X_-$, which is of the
form
\begin{equation}\label{eq:form-at-pa-X_-}
\aXtmz+\aXtmp(\mu+\imath 0)^{\imath\sigma}+\aXtmm(\mu-\imath
0)^{\imath\sigma},
\end{equation}
with $\aXtmz, \aXtmpm$ being $\CI$ near $\pa X_-$, and whose
restrictions to $X_+$, resp.\ $X_0$, resp.\ $X_-$ are $\uXtXp$, resp,\
$\uXtXz$, resp.\ $\uXtXm$. This distribution satisfies $\tilde P_\sigma \uXt=f$ on
each of $X_+$, $X_0$ and $X_-$. Further, $\uXt$ being $\CI$ near
$\pa X_+$, $\tilde P_\sigma \uXt-f$ is $\CI$ there, vanishing on $X_0\cup X_+$,
thus vanishing near $\pa X_+$ as well, i.e.\ $\tilde P_\sigma \uXt-f$ is
supported at $\pa X_-$. But there $\uXt$ has the form
\eqref{eq:form-at-pa-X_-}, and thus $\tilde P_\sigma \uXt$ necessarily has a
similar form with the exponents decreased by $1$ (since $\tilde P_\sigma$ is
second order, but is characteristic on $N^*\pa X_-$). Correspondingly,
as long as $\sigma\notin\imath\ZZ$, $\tilde P_\sigma \uXt-f$ cannot be a sum of
differentiated delta distributions on $\pa X_-$, so the vanishing of
$\tilde P_\sigma \uXt-f$ away from $\pa X_-$ shows that $\tilde P_\sigma \uXt=f$.
Thus, given $\sigma\notin\imath\ZZ$ which is not a pole of $\cR_{X_\pm}(\pm\sigma)$, and
given $f\in\CI(\tilde X)$, we showed that $f=\tilde P_\sigma \uXt$.

\begin{prop}\label{prop:parts-to-whole-res}
For $\sigma\notin\imath\ZZ$, if $\sigma$ is not a pole of
$\cR_{X_\pm}(\pm\cdot)$, then $\sigma$ is not a pole of $\tPssb$.
\end{prop}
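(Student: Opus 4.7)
The plan is to construct, for every $f\in\CI(\tilde X)$, an explicit $\uXt\in\cX^s$ solving $\tilde P_\sigma\uXt=f$; since $\CI(\tilde X)$ is dense in $\cY^{s-1}$ and $\tilde P_\sigma:\cX^s\to\cY^{s-1}$ is a holomorphic Fredholm family by the discussion surrounding \eqref{eq:Pt-sigma-spaces}, surjectivity on this dense subspace upgrades (by closed range) to full surjectivity and hence, via the Fredholm alternative, to invertibility, ruling out a pole of $\tPssb$ at $\sigma$. The construction is the piecewise one sketched just before the proposition: build $\uXt$ on $X_+$, $X_0$, $X_-$ using the constituent resolvents $\cR_{X_+}(\sigma)$, $\cR_{X_0,\past}(\sigma)$, $\cR_{X_-}(-\sigma)$ and the Poisson operator $\cPXm(-\sigma)$, all of which are well-defined by the hypothesis, and then glue the three pieces into a single distribution on $\tilde X$.

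Concretely, define $\uXtXp$ on $X_+$ by \eqref{eq:hyp-to-global-inv}; by the hypothesis on $\cR_{X_+}(\sigma)$ this lies in $\CI(\overline{X_+})$ in the even sense, and its boundary value at $\pa X_+=\pa_+X_0$ serves as one piece of de Sitter data to define $\uXtXz$ on $X_0$ via \eqref{eq:dS-to-global-inv}. At $\pa_-X_0$, extract the smooth leading coefficient $\aXtXzm|_{\pa_-X_0}$ of $\uXtXz$ and feed it to $\cPXm(-\sigma)$ to define $\uXtXm$ on $X_-$ via \eqref{eq:X_--to-global-inv}. Across both interior hypersurfaces $\pa X_+$ and $\pa X_-$, Lemma~\ref{lemma:smooth-match} guarantees that the smooth Taylor series (in the even structure of $\tilde X$) of the adjacent pieces agree to all orders, so the three pieces fit together smoothly modulo the conormal singularity at $\pa X_-$ carried by $\cPXm(-\sigma)$ and $\cR_{X_0,\past}(\sigma)$.

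The main obstacle is the gluing across $\pa X_-$, where the conormal contributions on the $X_0$ and $X_-$ sides must be assembled into a single distribution on $\tilde X$. The plan is to show that there is a unique distribution near $\pa X_-$ of the form \eqref{eq:form-at-pa-X_-},
\[
\aXtmz+\aXtmp(\mu+\imath 0)^{\imath\sigma}+\aXtmm(\mu-\imath 0)^{\imath\sigma},
\]
with $\aXtmz,\aXtmpm$ smooth, whose restrictions to $X_0$ and $X_-$ reproduce the conormal expansions of $\uXtXz$ and $\uXtXm$. Existence reduces order by order in Taylor series at $\pa X_-$ to inverting, at each $j\in\ZZ$, the matrix
\[
\begin{bmatrix}e^{-\pi(\sigma-\imath j)} & e^{\pi(\sigma-\imath j)}\\ 1 & 1\end{bmatrix},
\]
whose determinant is $2\sinh(\pi(\sigma-\imath j))$ and hence nonzero for every $j$ precisely when $\sigma\notin\imath\ZZ$; uniqueness holds because $(\mu\pm\imath 0)^{\imath\sigma}$ is never a differentiated delta distribution at $\pa X_-$ under the same hypothesis.

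Finally, I would verify $\tilde P_\sigma\uXt=f$: by construction the equation holds on each of $X_+$, $X_0$, $X_-$, and smoothness of $\uXt$ near $\pa X_+$ extends it across that hypersurface, so $\tilde P_\sigma\uXt-f$ is supported at $\pa X_-$. But $\tilde P_\sigma$ is second order and characteristic along $N^*\pa X_-$, so applied to an element of the form \eqref{eq:form-at-pa-X_-} it can at worst shift exponents by one, producing again a distribution of that conormal class; a nonzero sum of differentiated delta distributions at $\pa X_-$ cannot arise in its image when $\sigma\notin\imath\ZZ$, forcing $\tilde P_\sigma\uXt=f$. The form \eqref{eq:form-at-pa-X_-} places $\uXt$ locally in $H^s$ near $\pa X_-$ for every $s<1/2-\im\sigma$, which is below the variable-order threshold at $\pa_-X_0$, while $\uXt$ is smooth at $\pa_+X_0$, which is above the threshold there; so $\uXt\in\cX^s$, yielding the desired surjectivity.
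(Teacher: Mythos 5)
Your construction is exactly the paper's own argument: the same equations \eqref{eq:hyp-to-global-inv}, \eqref{eq:dS-to-global-inv}, \eqref{eq:X_--to-global-inv} to build $\uXt$ piecewise, the same appeal to Lemma~\ref{lemma:smooth-match} for smooth gluing at $\pa X_+$ and $\pa X_-$, the same order-by-order inversion of the $\sinh(\pi(\sigma-\imath j))$-determinant matrix, and the same observation that a second-order operator characteristic along $N^*\pa X_-$ applied to \eqref{eq:form-at-pa-X_-} cannot produce differentiated deltas. The only thing you add is spelling out the implicit final step (density of $\CI(\tilde X)$ in $\cY^{s-1}$, closed range from Fredholmness, and index zero forcing invertibility from surjectivity), which is a correct and worthwhile clarification but not a different method.
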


Combining Propositions~\ref{prop:global-to-pieces} and
\ref{prop:parts-to-whole-res} yields

\begin{thm}\label{thm:parts-to-whole}(Strengthened version of \cite[Proposition~7.3]{Baskin-Vasy-Wunsch:Radiation}.)
The poles of $\tPsb$ in $\Cx\setminus\imath\ZZ$ are
exactly the union of the poles of $\cR_{X_+}(\sigma)$ and
$\cR_{X_-}(-\sigma)$.

Furthermore, with the blocks $X_+$, $X_0$ and $X_-$ listed
left-to-right and top-to-bottom, and $(.)_{jk}$ referring to the $jk$
entry of this matrix to shorten the notation, and with
$\cPXzf(\sigma)^{-1}_j$ denoting the $j$th component of
$\cPXzf(\sigma)^{-1}$ ($j=1,2$, so $j=1$ corresponds to the
superscript $+$, $j=2$ to the superscript $-$ in Definition~\ref{def:Poisson-X0}), the matrix of $\tPsb$ is, column by column, (so $X_+$ is the first column, etc.)
\begin{equation*}\begin{aligned}
&(\tPsb)_{.1}=\begin{bmatrix}\xXp^{\imath\sigma-(n-1)/2}\cR_{X_+}(\sigma) \xXp^{-\imath\sigma+(n-1)/2+2}\\
\xXz^{\imath\sigma-(n-1)/2}\cPXzb(\sigma)
(0,\cPXp^{-1}(-\sigma)\cR_{X_+}(\sigma)
\xXp^{-\imath\sigma+(n-1)/2+2})\\ \xXm^{\imath\sigma-(n-1)/2}\cPXm(-\sigma) \cPXzf(\sigma)^{-1}_2\xXz^{-\imath\sigma+(n-1)/2} ()_{21}
\end{bmatrix},\\
&(\tPsb)_{.2}=\begin{bmatrix}0\\
\xXz^{\imath\sigma-(n-1)/2}\cR_{X_0}(\sigma)\xXz^{-\imath\sigma+(n-1)/2 +2}\\
\xXm^{\imath\sigma-(n-1)/2}\cPXm(-\sigma)
\cPXzf(\sigma)^{-1}_2\xXz^{-\imath\sigma+(n-1)/2} ()_{22}&\end{bmatrix},\\
&(\tPsb)_{.3}=\begin{bmatrix}0\\
0\\ \xXm^{\imath\sigma-(n-1)/2} \cR_{X_-}(-\sigma)
\xXm^{-\imath\sigma+(n-1)/2 +2}\end{bmatrix}.
\end{aligned}\end{equation*}
\end{thm}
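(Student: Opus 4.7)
The first assertion is immediate from combining the two propositions just cited: Proposition~\ref{prop:global-to-pieces} shows that wherever $\tilde P_\sigma$ is invertible, the constituent resolvents $\cR_{X_\pm}(\pm\sigma)$ are regular, while Proposition~\ref{prop:parts-to-whole-res} provides the converse inclusion away from $\imath\ZZ$. Together they pin down the pole set of $\tPsb$ in $\Cx\setminus\imath\ZZ$.

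For the matrix form, my plan is to revisit the three-step construction of $\uXt=\tPsb f$ from the proof of Proposition~\ref{prop:parts-to-whole-res}, i.e.\ formulas \eqref{eq:hyp-to-global-inv}--\eqref{eq:X_--to-global-inv}, and to read off the dependence of each piece $u_{\tilde X,X_\bullet}$ on each piece $f|_{X_{\bullet'}}$ by decomposing $f=f_++f_0+f_-\in\CI(\tilde X)$ with $\supp f_\bullet\subset\overline{X_\bullet}$. The strict lower-triangular zero pattern is then immediate from the directionality of the construction: $\uXtXp$ depends only on $f|_{X_+}$; the Poisson datum for $\uXtXz$ is fed from $\uXtXp|_{\pa X_+}$ while its inhomogeneous term comes from $f|_{X_0}$; and $\uXtXm$ receives a Poisson datum from $\uXtXz|_{\pa_- X_0}$ plus an inhomogeneous term from $f|_{X_-}$.

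The three diagonal entries then match \eqref{eq:hyp-to-global-inv}, \eqref{eq:dS-to-global-inv} and \eqref{eq:X_--to-global-inv} after dropping the Poisson contributions that vanish when the data feeding them are zero. To obtain the off-diagonal entries $(2,1)$, $(3,1)$, $(3,2)$ in the form stated, I would rewrite the boundary datum $\vXtXpmz=\uXtXp|_{\pa X_+}$ entering $\cPXzb(\sigma)(0,\cdot)$ as $\cPXp^{-1}(-\sigma)$ applied to $\cR_{X_+}(\sigma)\xXp^{-\imath\sigma+(n-1)/2+2}f$, using that $\cPXp(-\sigma)^{-1}$ extracts the coefficient of $\xXp^{(n-1)/2-\imath\sigma}$ in a solution (see the remark after Definition~\ref{def:Poisson-X+}). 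Analogously, I would rewrite $\aXtXzm|_{\pa_-X_0}$ feeding $\cPXm(-\sigma)$ as $\cPXzf(\sigma)^{-1}_2$ applied to the corresponding $X_0$-level expression --- namely $\cR_{X_0,\past}(\sigma)\xXz^{-\imath\sigma+(n-1)/2+2}f$ for column~$2$, and the sum of that with the Poisson contribution coming from $X_+$ for column~$1$.

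The main technical point in making this rigorous is the identification of boundary data at $\pa X_+$ and at $\pa_- X_0$: one has to verify that the two descriptions of each coefficient --- once as an even-smooth restriction of the $\uXt$-level function, once as the output of an inverse Poisson operator applied to a resolvent output --- actually agree. This rests on Lemma~\ref{lemma:smooth-match}, which propagates matching of even-smooth boundary values to matching of full Taylor series, together with the uniqueness assertions in the Poisson operator propositions. Both ingredients are already in place, so no new analysis beyond a careful bookkeeping of the established identities is required.
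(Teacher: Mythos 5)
Your proposal is correct and follows the paper's own route: the pole statement is exactly the combination of Propositions~\ref{prop:global-to-pieces} and \ref{prop:parts-to-whole-res}, and the matrix form is read off by tracing the three-stage construction \eqref{eq:hyp-to-global-inv}--\eqref{eq:X_--to-global-inv}, identifying the Poisson data at $\pa X_+$ and $\pa_- X_0$ via $\cPXp(-\sigma)^{-1}$, $\cPXzf(\sigma)^{-1}_2$ and using Lemma~\ref{lemma:smooth-match} to match the even Taylor series across the interfaces. One small caveat: a literal decomposition $f=f_++f_0+f_-$ with each $f_\bullet\in\CI(\tilde X)$ supported in $\overline{X_\bullet}$ does not exist for general $f\in\CI(\tilde X)$; it is cleaner to say (as the paper implicitly does) that each block of $\uXt$ depends on the restrictions $f|_{X_{\bullet'}}$ according to the lower-triangular pattern forced by the directionality of the construction, which is what you actually use.
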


\begin{rem}
We finally remark that excluding $\imath\sigma\in\ZZ$ in
\eqref{eq:pmi0-comb} was excessive; it suffices to rule out that
$\imath\sigma$ is a negative integer if we work in terms of the
distributions $\mu^{\imath\sigma}_\pm$ instead, i.e.\ $\im\sigma<1$
suffices there. Further, for $\im\sigma>-1$, all operators in
the two-by-two upper left block are well-defined (and holomorphic) even if
$\imath\sigma$ is an integer as long as $\sigma$ is not a pole of
$\cR_{X_+}(\sigma)$. Indeed, $\cPXp^{-1}(-\sigma)$ reads off the leading
asymptotic term of $\cR_{X_+}(\sigma)$, while, for $\im\sigma>0$, $\cPXzb(\sigma)$ solves
the de Sitter Klein-Gordon equation where the second, more decaying
(here we use $\im\sigma>0$) datum is specified, which makes sense in a
holomorphic manner even
in the case of integer $\imath\sigma$, and if we merely assume
$\im\sigma>-1$, the same conclusion holds though the specified
behavior, $x^{(n-1)/2-\imath\sigma}\aXzm|_{\pa_+X_0}$, is now possibly
the less
decaying one. (At $\im\sigma=-1$, constructing $\vXzm$ near $\pa_+X_0$
introduces logarithmic terms and changes the construction
significantly. This is still possible, as was done in
\cite{Vasy:De-Sitter}, but this seriously affects holomorphic arguments.) Thus, when composed with
restriction to $\overline{X_+}\cup X_0$ from the left and extension of
compactly supported functions on $\overline{X_+}\cup X_0$ from the
right, the only poles of $\tPsb$ are those of
$\cR_{X_+}(\sigma)$ and possibly $\sigma$ with $\imath\sigma$ an
integer with $\im\sigma\leq -1$. We also refer to
\cite[Remark~4.6]{Vasy-Dyatlov:Microlocal-Kerr}, where the same
conclusion is established via a different argument.
\end{rem}

\def\cprime{$'$} \def\cprime{$'$}

\end{document}